\documentclass[12pt]{article}   

\setlength{\textwidth}{18cm}     
\setlength{\evensidemargin}{-.3cm} 
\setlength{\oddsidemargin}{-.3cm}

\setlength{\textheight}{23cm}   
\setlength{\topmargin}{-2cm}  
\setlength{\headsep}{1cm}

\usepackage{amssymb}
\usepackage{mathbbol}
\usepackage{latexsym}
\usepackage{amsthm}
\usepackage{enumerate}
\usepackage{epsfig}
\usepackage{graphicx}
\usepackage{color}
\usepackage{float}
\usepackage{subfigure}
\usepackage{amsmath}
\usepackage{makeidx}
\usepackage{fancyhdr}
\pagestyle{fancy}
\usepackage{lastpage}
\usepackage{url}
\usepackage{setspace}
\newcommand{\mb}{\mathbb}
\newcommand{\mc}{\mathcal}
\newcommand{\m}{\mbox}
\newcommand{\f}{\frac}
\newcommand{\ld}{\lambda}
\newcommand{\og}{\omega}

\newcommand{\rt}{\sqrt}
\newcommand{\fa}{\forall}

\newcommand{\dd}{\partial}
\newcommand{\tb}{\textbf}

\newcommand{\nn}{\nonumber}
\newcommand{\ep}{\epsilon}
\newcommand{\qd}{\quad}
\newcommand{\al}{\alpha}

\newcommand{\iy}{\infty}



\fancyfoot[C]{\thepage}			
\fancyhead{}{}


\DeclareSymbolFont{AMSb}{U}{msb}{m}{n}  
\DeclareMathSymbol{\Sph}{\mathbin}{AMSb}{"53} \DeclareMathSymbol{\R}{\mathbin}{AMSb}{"52}
\DeclareMathSymbol{\T}{\mathbin}{AMSb}{"54} \DeclareMathSymbol{\Z}{\mathbin}{AMSb}{"5A}
\DeclareMathSymbol{\K}{\mathbin}{AMSb}{"4B}


%

\newenvironment{theorem}[1]{\vspace{0.2cm}\noindent    {\bf Theorem {#1}}}{\vspace{.1cm}}
\newenvironment{lemma}[1]{\vspace{0.2cm}\noindent    {\bf Lemma {#1}}}{\vspace{.1cm}}
\newenvironment{corollary}[1]{\vspace{.9cm}\noindent    {\bf Corollary {#1}}}{\vspace{.1cm}}

\def\qed{\hfill $\Box$}
\renewenvironment{proof}{\vspace{-.05cm}   \noindent{\bf Proof: }}{\qed \vspace{0.75cm}}



\newcounter{case}




\parindent 0ex

\begin{document}
	
	\title{Stability estimates for relativistic Schr\"odinger equation from partial boundary data}
	
	\author{Soumen Senapati\footnote{\emph{E-mail address:} soumen@tifrbng.res.in}\\
	\multicolumn{1}{p{.4\textwidth}}{\centering{\small{Tata Institute of Fundamental Research Centre for Applicable Mathematics }   
			Bangalore, India}}}
	\maketitle
	
	\abstract{In this article we study stability aspects for the determination of time-dependent vector and scalar potentials in relativistic  Schr\"odinger equation from partial knowledge of boundary measurements. For space dimensions strictly greater than 2 we obtain log-log stability estimates for the determination of vector potentials (modulo gauge equivalence) and log-log-log stability estimates for the determination of scalar potentials from partial boundary data assuming suitable a-priori bounds on these potentials.}
	
	\vspace{2mm}
	
	\textbf{Keywords:} Hyperbolic inverse problem, stability estimate, time-dependent coefficient, Partial boundary data. \\
		
	\textbf{Mathematics subject classification (2010):} 35L05, 35L20, 35R20, 65M32.
		
	\section{Introduction and statement of the main result}
	\label{sec:1}
	Let $\Omega$ be a bounded domain in $\R^n\ (n\ge3)$ with smooth boundary $\Gamma$.  Let $\nu(x)$ be the outward unit normal to $\Gamma$ and $Q$ be the finite cylindrical domain defined by $Q=(0,T)\times\Omega$ where $T>$ diam$(\Omega)$. We denote lateral boundary of $Q$ by $\Sigma:=(0,T)\times\Gamma$. The relativistic Schr\"odinger operator on $Q$ denoted by $\mc{L}_{\mc{A},q}$ is defined as	
	\[\mc{L}_{\mc{A},q}=(\dd_t+A_0(t,x))^2-\sum_{k=1}^{n}{(\dd_{x_k}+A_k(t,x))^2}+q(t,x).\]
	Here $\mc{A}\equiv(A_i)_{0\le i\le n}$ is the vector potential and $q$ is the scalar potential. We assume $q\in\mc{L}^{\infty}(Q)$ and all the components of vector potential are real valued functions belonging to $C_c^{\iy}(Q)$. We are interested in deriving stability estimates for the recovery of $\mc{A}$ (upto gauge equivalence) and $q$ from partial boundary data.\\
	
	\noindent We consider the IBVP:
	\[ 
	\begin{cases}
	\mc{L}_{\mc{A},q}(u)=0 \m{ in }Q,\\
	\left(u|_{t=0}, \partial_{t}u|_{t=0},u|_{\Sigma}\right)=(u_0,u_1,f).
	\end{cases}
	\]
	From \cite{lions,lassas} it is well known that if  $u_0\in H^1(\Omega)$, $u_1\in L^2(\Omega)$ and $f\in H^1(\Sigma)$ with the compatibility criteria, $u_0|_{\Gamma}=f|_{\{0\}\times\Gamma}$ then the above IBVP has a unique solution in $C^1\big([0,T];L^2(\Omega)\big)\cap C\big([0,T];H^1(\Omega)\big)$ and there exists a constant $C>0$ such that for any $t\in[0,T]$ we have
	\begin{align}
	\|\dd_\nu u\|_{L^2(\Sigma)}+\|\dd_t u(t,\cdot)\|_{L^2(\Omega)}+\|u(t,\cdot)\|_{H^1(\Omega)}\le C\left(\|u_0\|_{H^1(\Omega)}+\|u_1\|_{L^2(\Omega)}+\|f\|_{L^2(\Sigma)}\right).\label{regularity}
	\end{align}
	Let us introduce few notations before we state the result.
	For $\og\in\mb{S}^{n-1}$, we define the following subsets of $\Gamma$ and $\Sigma$, respectively
	\begin{align}
	&\Gamma_{+}(\omega)=\{x\in\Gamma; \nu(x)\cdot\omega>0\}, \qd \Gamma_{-}(\omega)=\{x\in\Gamma;\nu(x)\cdot\omega<0\},\nn\\
	\label{} &\Gamma_{+,\ep}(\omega)=\{x\in\Gamma;\ \nu(x)\cdot\omega>\ep\},\qd \Gamma_{-,\ep}(\omega)=\{x\in\Gamma;\ \nu(x)\cdot\omega<\ep\},\nn\\
	\label{Gammaplusminus}  &\Sigma_{\pm}(\omega)=(0,T)\times \Gamma_{\pm}(\omega),\qd \Sigma_{\pm,\epsilon/2}(\omega)=(0,T)\times \Gamma_{\pm,\epsilon/2}(\omega).
	\end{align}	
	For a fixed  $\og_0\in\mb{S}^{n-1}$, let us define the input-output operator $\Lambda$ as 
	\begin{align}
	&\Lambda:{H^1(\Omega)\times L^2(\Omega)\times H^1(\Sigma)}\mapsto\ {L^2(\Sigma_{-,\epsilon/2}{(\og_0)})\times H^1(\Omega)} \label{defn}\\
	&\Lambda(u_0,u_1,f)=\left(\dd_\nu u|_{\Sigma_{-,\epsilon/2}{(\og_0)}},u(T,\cdot)\right).\label{Defn}
	\end{align}
	By $\|\cdot\|_{*}$ we denote operator norm of the input-output operator with respect to the range and domain as indicated in \eqref{defn}. Given $C_0\m{ and }\alpha>0$ we introduce the admissble set of potentials $(\mc{A},q)$ as 
	\[\mc{M}(C_0,\alpha)=\{(\mc{A},q)\in C_c^{\iy}(Q)^{n+1}\times L^{\iy}(Q);\  \|\mc{A}\|_{H^{\f{n+1}{2}+\alpha}(Q)}\le C_0,\|q\|_{H^{\f{n+1}{2}+\alpha}(Q)}\le C_0\}.\]
	Now we state the main result.
	
	\begin{theorem}{} 
		For $i=1,2\ $ let $(\mc{A}_i,q_i)\in\mc{M}({C_0,\alpha})$ and $T>\m{diam}(\Omega)$. We denote the input-output operator corresponding to $\mc{L}_{\mc{A}_i,q_i}$ by $\Lambda_i$. Further,
		assume that div$_{(t,x)}\mc{A}_1$=div$_{(t,x)}\mc{A}_2$. Then there exists $C,\mu_1,\mu_2,\alpha_1$ and $\alpha_2>0$ depending on $C_0,\alpha$ and $Q$ such that
		
		\vspace*{-1mm}
		
		\begin{align} 
		&\|\mc{A}_2-\mc{A}_1\|_{L^{\infty}(Q)}\le C\left(\|\Lambda_1-\Lambda_2\|_{*}^{\mu_1}+\big|\log|\log\|\Lambda_1-\Lambda_2\|_{*}|\big|^{-\mu_2}\right),\label{main result 1}\\
		&\|q_2-q_1\|_{L^{\infty}(Q)}\le C\left(\|\Lambda_1-\Lambda_2\|_{*}^{\alpha_1}+\Big|\log\big|\log|\log\|\Lambda_1-\Lambda_2\|_{*}|\big|\Big|^{-\alpha_2}\right).\label{main result 2}
		\end{align}	
	\end{theorem}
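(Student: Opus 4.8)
The plan is to follow the now-standard route for stability in time-dependent coefficient inverse problems: build geometric optics (GO) solutions concentrated along light rays, insert them into an integral identity coming from the difference of the two operators, recover first the light-ray transform of $\mathcal{A}_2-\mathcal{A}_1$ (hence $d\mathcal{A}$ up to gauge) and then, after a gauge correction, the light-ray transform of $q_2-q_1$, and finally convert the transform bounds into $L^\infty$ bounds by a Fourier-slicing / frequency-truncation argument that produces the iterated-log losses. The divergence hypothesis $\mathrm{div}_{(t,x)}\mathcal{A}_1=\mathrm{div}_{(t,x)}\mathcal{A}_2$ is exactly what is needed so that the gauge ambiguity is rigid and the transport equations for the GO amplitudes have the clean form $2(\partial_t\pm\omega\cdot\nabla)\Phi = -(A_0\pm\omega\cdot A')\Phi$ along the characteristics, making the first-order term integrable.

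**First** I would set up the GO solutions: for a large parameter $\tau>0$ and $\omega\in\mathbb{S}^{n-1}$ close to $\omega_0$, seek $u_2 = e^{i\tau(x\cdot\omega+t)}\big(a_{\omega}(t,x)+r_{\omega,\tau}(t,x)\big)$ solving $\mathcal{L}_{\mathcal{A}_2,q_2}u_2=0$, with $a_\omega$ solving the transport equation above so that the amplitude absorbs the vector potential, and a Carleman-type energy estimate (using $T>\mathrm{diam}(\Omega)$) giving $\|r_{\omega,\tau}\|\lesssim \tau^{-1}$ in a suitable norm; similarly a solution $v_1$ of the adjoint problem $\mathcal{L}_{\mathcal{A}_1,q_1}^*v_1=0$ with phase $e^{-i\tau(x\cdot\omega+t)}$. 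Here I would need the a priori $H^{\frac{n+1}{2}+\alpha}$ bounds, Sobolev embedding $H^{\frac{n+1}{2}+\alpha}(Q)\hookrightarrow C^0$, and interpolation to control intermediate norms of the coefficients quantitatively. **Next**, subtract the equations: with $w=u_2-u_1$ where $u_1$ solves $\mathcal{L}_{\mathcal{A}_1,q_1}u_1=0$ with the same Cauchy and lateral data, an integration by parts produces the identity
\begin{align}
\int_Q \big(2(A_{2}-A_{1})\cdot(\partial_{t,x}u_2)\, \overline{v_1} + (\text{0th order})\, u_2\overline{v_1}\big)\,dt\,dx = \langle (\Lambda_1-\Lambda_2)(u_0,u_1,f),(\cdot)\rangle + (\text{terms on }\Sigma_{+,\epsilon/2}).\nonumber
\end{align}
The boundary term over the uncontrolled part $\Sigma_{+,\epsilon/2}(\omega_0)$ is handled, as in the Bukhgeim–Uhlmann / partial-data tradition, by choosing the GO amplitude $a_\omega$ to vanish near $\Gamma_{+,\epsilon}(\omega_0)$ and estimating the leftover by the Carleman weight; this forces the restriction $\omega$ near $\omega_0$ and is the source of the angular limitation in the light-ray transform.

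**Then** I would pass $\tau\to\infty$: the remainder terms vanish, and the leading term yields, after integrating the transport ODEs explicitly, the light-ray transform of the one-form $\mathcal{A}_2-\mathcal{A}_1$ tested against exponentials in the transverse variables — i.e. partial Fourier information on $d(\mathcal{A}_2-\mathcal{A}_1)$ — controlled by $\|\Lambda_1-\Lambda_2\|_*$ times powers of $\tau$ plus an $O(\tau^{-1})$ error. Optimizing $\tau$ (balancing $\tau^N\|\Lambda_1-\Lambda_2\|_*$ against $\tau^{-1}$) gives a Hölder bound on low frequencies of $d(\mathcal{A}_2-\mathcal{A}_1)$ and, using the a priori bound to control high frequencies (tail $\lesssim \rho^{-\alpha}C_0$ above frequency $\rho$), a single frequency cutoff at $\rho\sim|\log\|\Lambda_1-\Lambda_2\|_*|^{c}$ yields $\|\mathcal{A}_2-\mathcal{A}_1\|_{L^\infty}\lesssim \|\Lambda_1-\Lambda_2\|_*^{\mu_1}+|\log|\log\|\Lambda_1-\Lambda_2\|_*||^{-\mu_2}$, using $\mathrm{div}(\mathcal{A}_2-\mathcal{A}_1)=0$ to recover $\mathcal{A}_2-\mathcal{A}_1$ itself from $d(\mathcal{A}_2-\mathcal{A}_1)$ via an elliptic (Hodge) estimate with quantitative constants. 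For $q$ I would run the same scheme one order down: once $\mathcal{A}_2-\mathcal{A}_1$ is known to be small, a gauge transformation reduces to the case of nearly equal vector potentials, and the same GO/integral-identity machinery gives the light-ray transform of $q_2-q_1$ up to an error controlled by $\|\mathcal{A}_2-\mathcal{A}_1\|_{L^\infty}$ — but that error already carries one logarithm, so inverting the light-ray transform and applying the frequency-cutoff argument a second time stacks an extra logarithm, producing the triple-log rate in \eqref{main result 2}.

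**The main obstacle** I anticipate is making every constant in this chain explicit and uniform over $\mathcal{M}(C_0,\alpha)$ while simultaneously controlling the partial-data boundary term: the Carleman estimate must be strong enough (a boundary Carleman estimate with the correct sign on $\Sigma_{\pm,\epsilon/2}(\omega_0)$, in the spirit of Bukhgeim–Uhlmann and its hyperbolic analogues) to absorb the contribution of $\partial_\nu u$ on the unmeasured part of $\Sigma$, and the GO amplitudes must be engineered (via a smooth cutoff in the angular variable and along the flow) to be supported away from $\Gamma_{+,\epsilon}(\omega_0)$ without destroying the transport equation or the $\tau^{-1}$ remainder bound. A secondary technical point is the quantitative inversion of the (restricted, since $\omega$ ranges only near $\omega_0$) light-ray transform on the relevant frequency range — one only gets Fourier data in a cone, so recovering $L^\infty$ needs the support condition $\mathcal{A},q\in C_c^\infty(Q)$ together with analyticity/Paley–Wiener-type extension estimates, and it is there that the interplay between the a priori smoothness $\frac{n+1}{2}+\alpha$ and the achievable frequency cutoff determines the exponents $\mu_i,\alpha_i$.
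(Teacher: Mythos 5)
Your overall strategy (geometric optics solutions, an integral identity from Green's formula, estimates on the light-ray transform of $\mathcal{A}_2-\mathcal{A}_1$, and a frequency argument using the a priori Sobolev bounds) matches the paper's, but the proposal has a genuine gap at the step that actually produces the iterated logarithms. Because $\omega$ is restricted to a neighbourhood of $\omega_0$, the Fourier data $\widehat{\mathcal{A}}(\tau,\xi)$ is controlled only on an open \emph{cone} $\mathcal{C}_0$, not on a ball. The paper must therefore invoke Vessella's quantitative analytic continuation: for the rescaled analytic function $f_k(t,x)=\widehat{\mathcal{A}}(kt,kx)$ one gets $\|f_k\|_{L^\infty(B(0,1))}\le Ce^{k(1-\theta)}\|f_k\|^{\theta}_{L^\infty(\mathcal{C}_0\cap B(0,1))}$, so the low-frequency bound on $B(0,k)$ costs a factor $e^{k(1-\theta)}$ and a H\"older exponent $\theta$. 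Balancing this against the GO error forces $\lambda\sim e^{ck}$ and then $k\sim\log\big|\log\|\Lambda_1-\Lambda_2\|_*\big|$, which is where the double log comes from. Your ``single frequency cutoff at $\rho\sim|\log\|\Lambda_1-\Lambda_2\|_*|^{c}$'' would only be consistent with full-ball Fourier data and would yield a single-log rate; you mention Paley--Wiener/analyticity only as a ``secondary technical point,'' but it is the central quantitative mechanism, and without it the claimed exponents do not follow. The same remark applies to the scalar potential: the third logarithm arises from running Vessella's estimate again on $\widehat{\tilde q}$ with an error term $\lambda^{\gamma}\|\mathcal{A}\|_{L^\infty}$ that already carries two logarithms.

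Two further points. First, the treatment of the unmeasured boundary portion: the paper's GO solutions carry real exponential weights $e^{\pm\lambda(t+x\cdot\omega)}$ precisely so that the \emph{boundary} Carleman estimate, whose favourable term is $\lambda\int_{\Sigma_+(\omega)}e^{-2\lambda(t+x\cdot\omega)}|\omega\cdot\nu||\partial_\nu u|^2$, absorbs $\partial_\nu u$ on $\Sigma_{+,\epsilon/2}(\omega_0)\subseteq\Sigma_+(\omega)$ (where $|\nu\cdot\omega|\ge\epsilon/2$). Your proposal to instead cut off the GO amplitude near $\Gamma_{+,\epsilon}(\omega_0)$ would not work as stated: the boundary term involves $\partial_\nu u$ of the \emph{difference} of solutions, which is not controlled by the support of the amplitude, and a transverse cutoff would in any case destroy the transport equation. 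Second, recovering $\mathcal{A}$ from the restricted ray transform: rather than a Hodge-type inversion of $d(\mathcal{A}_2-\mathcal{A}_1)$, the paper solves, for each $(\tau,\xi)$ in the cone, an explicit $(n+1)\times(n+1)$ linear system built from $n$ linearly independent directions $\omega^k\in\mathcal{B}_{\tau,\xi}$ together with the divergence-free row $(\tau,\xi)\cdot\widehat{\mathcal{A}}=0$, and must prove a uniform lower bound on the determinant; the existence of $n$ such directions inside the small spherical cap is exactly where $n\ge3$ enters (Lemmas 5--6), a point your sketch does not engage with.
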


	Let us give some prior work done in the context of inverse problems for hyperbolic equations. Motivated by the construction of complex geometric optics solutions by Sylvester and Uhlman\cite{uhlman}, Rakesh and Symes\cite{raksymm} proved unique determination of time-independent scalar potential in the wave equation from full Neumann-Dirichlet data. It was extended by Isakov \cite{isa1} to the recovery of time-independent time derivative perturbation but in the absence of any space derivative perturbation. In \cite{rammsjo}, Ramm and Sj\"ostrand dealt uniqueness issues of time-dependent potential  in an infinte cylinder. This was later generalized by Salazar; see \cite{salazar} and \cite{salazar 2}. For finite time, Rakesh and Ramm showed in \cite{rakramm} that time-dependent potential can be recovered in some specific set outside which they are known. We should also mention the unique recovery of time dependent potential from scattering data by Stefanov\cite{Stefanov 1}. Kian in \cite{kian1},\cite{kian2},\cite{kian3} considered the problem of unique determination and stability of time derivative perturbation and scalar potential from full Dirichlet to Neumann data. Using properties of light-ray transform from \cite{siamak}, \cite{Stefanov 2}  Krishnan and Vashisth\cite{VM} proved uniqueness of all coefficients (upto a gauge invariance for vector potential term) appearing in relativistic Schr\"odinger equation from partial boundary data. Similar coefficient recovery problems in various settings were extensively studied by Yamamoto, Bellassoued, Choulli and Ben A\"icha etc in numerous papers; see \cite{BJY1},\cite{BJY2},\cite{bellaicha1},\cite{bellaicha1},\cite{aicha1},\cite{aicha2},\cite{BR}. Bellassoued and Ben A\"icha \cite{bellaicha1} stably recovered both time-dependent vector field term and scalar potential from full input-output operator but in the absence of time-derivative perturbation. In a recent work by Bellassoued and Fraj\cite{BF}, using Neumann measurements made on arbitrary part of the boundary stable determination of zeroth order time-dependent perturbation was shown. The current paper strengthens the result by Bellassoued and Ben A\"icha\cite{bellaicha1} even in the full data case. In all of our discussion, we consider smooth coefficients vanishing on boundary for vector potentials for simplicity. One can use the approximation argument presented in \cite{kian1} for more general coefficients. 
	
	\section{Carleman Estimates and Geometric Optics Solutions}
	
	We start by providing geometric optics solutions to the relativistic Schr\"odinger equation depending on a large parameter. 	
	The existence of geometrical optics solutions will be shown using weighted ${L^2}$-coercivity of some conjugated operators also known as Carleman estimate. Then to bound certain boundary terms we will need boundary Carleman estimate. In \cite{kian2}, Kian proved those estimates. Without proof we state the results by Krishnan and Vashisth\cite{VM} which were motivated by the one in \cite{kian2}.    
	
	\begin{theorem}{(Boundary Carleman estimates)}
	For $\og\in\mb{S}^{n-1}$, $(\mc{A},q)\in{\mc M(C_0,\alpha)}$ and $u\in C^2(\bar{Q})$ satisfying $u|_{t=0}=\partial_tu|_{t=0}=\ u|_{\Sigma}=0\ $ there exists $\ld_0,C>0$ both of which depend only on $C_0,\alpha\m{ and }Q$ such that for  $\ld\ge\ld_0$ we obtain
	\begin{align}
	\int_{Q}  e^{-2\ld(t+x\cdot \og)}\left(\ld^2|u(t,x)|^2+|\nabla_{(t,x)}u(t,x)|^2\right) dxdt&+\ld \int_{\Sigma_{+}(\omega)}e^{-2\ld(t+x\cdot \og)}|\omega\cdot \nu(x)||\partial_{\nu}u|^2dS\nn\\
	+\ld\int_{\Omega} e^{-2\ld(T+x\cdot \og)}|\dd_tu(T,x)|^2\ dx\le C\ \Bigg{(}\int_{Q}& e^{-2\lambda(t+x\cdot\omega)}|\mathcal{L}_{\mathcal{A},q}u(t,x)|^2dxdt\nn\\ 
	+\int_{\Omega} &e^{-2\lambda(T+x\cdot\omega)}\left(\lambda^2|u(T,x)|^2+\ld|\nabla_x{u(T,x)}|^2\right)dx\nn\\
	+&\lambda\int_{\Sigma_{-}(\omega)}{ e^{-2\lambda(t+x\cdot\omega)}|\omega\cdot \nu(x)||\partial_{\nu}u|^2}dS \Bigg{)}.\label{boundarycarleman}
	\end{align}
    \end{theorem}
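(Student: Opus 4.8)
The estimate \eqref{boundarycarleman} is a weighted energy inequality attached to the \emph{linear}, hence \emph{characteristic}, phase $\varphi(t,x)=t+x\cdot\og$ for the d'Alembertian $\Box=\dd_t^2-\Delta_x$; the plan is to conjugate out the weight and then run a multiplier/energy argument, following the approach of \cite{kian2,VM}.

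\textbf{Conjugation.} Put $v=e^{-\ld\varphi}u$. Since $\varphi$ is affine and $(\dd_t\varphi)^2-|\gd_x\varphi|^2=1-|\og|^2=0$, $\Box\varphi=0$, the $\ld^2$ and $\ld$ zeroth-order contributions cancel identically and one gets the exact identity $e^{-\ld\varphi}\Box u=\Box v+2\ld Xv$ with $X:=\dd_t-\og\cdot\gd_x$. Crucially $X\varphi=0$: $X$ is tangent to the level sets of $\varphi$, and its integral curves are the null lines $s\mapsto(t_0+s,\,x_0-s\og)$. Expanding $\mc L_{\mc A,q}=\Box+\mc P_1+p$ with $\mc P_1=2A_0\dd_t-2A'\cdot\gd_x$ (first order) and $p=\dd_tA_0-\m{div}_xA'+A_0^2-|A'|^2+q$ (a potential), conjugation gives $e^{-\ld\varphi}\mc L_{\mc A,q}u=\Box v+2\ld Xv+\mc P_1v+(2\ld\,a+p)v=:G$ with $a=A_0-\og\cdot A'$, the coefficients of $\mc P_1$, $a$, $p$ being bounded in $L^\iy(Q)$ by a constant depending only on $C_0,\al$ through Sobolev embedding. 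The hypotheses on $u$ transfer to $v$: $v|_{t=0}=\dd_tv|_{t=0}=v|_\Sigma=0$, whence also $\dd_tv|_\Sigma=0$, $\gd_xv|_\Sigma=(\dd_\nu v)\nu$ and $\gd_xv(0,\cdot)=0$.

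\textbf{Building the estimate.} Three ingredients are then combined. (i) A \emph{multiplier identity}: since $[\Box,X]=0$, squaring $\Box v+2\ld Xv=G-\mc P_1v-(2\ld a+p)v$ and integrating over $Q$ turns the principal cross term $4\ld\int_Q(\Box v)(Xv)\,dx\,dt$ into a pure boundary integral, which after integration by parts and use of the vanishing $t=0$ data equals $2\ld\int_\Sigma(\og\cdot\nu)|\dd_\nu v|^2\,dS+2\ld\int_\Omega\big(|Xv(T,\cdot)|^2+|\gd_x^{\perp}v(T,\cdot)|^2\big)dx$ (with $\gd_x^{\perp}$ the part of $\gd_x$ orthogonal to $\og$); splitting $\int_\Sigma(\og\cdot\nu)|\dd_\nu v|^2=\int_{\Sigma_+(\og)}(\og\cdot\nu)|\dd_\nu v|^2-\int_{\Sigma_-(\og)}|\og\cdot\nu|\,|\dd_\nu v|^2$ sends the $\Sigma_-$-flux to the right. (ii) A \emph{Poincar\'e inequality along the null lines}: each maximal segment of a line $s\mapsto(t_0+s,x_0-s\og)$ inside $Q$ meets $\dd Q$ first on $(\{0\}\times\Omega)\cup\Sigma$, where $v=0$, so integrating $Xv$ along it gives $\|v\|_{L^2(Q)}\le C(Q)\|Xv\|_{L^2(Q)}$. (iii) The \emph{hyperbolic energy estimate} for $\Box v$ with homogeneous lateral Dirichlet data, controlling $\|\gd_{(t,x)}v\|_{L^2(Q)}$ and $\|\dd_tv(T,\cdot)\|_{L^2(\Omega)}$; here the term $2\ld Xv$ must be retained on the right and handled so that its contribution $-2\ld\|\dd_tv\|^2_{L^2(\Omega)}$ is used with its favourable sign instead of producing an $e^{\ld T}$ factor. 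Assembling (i)--(iii), and absorbing $\mc P_1v$ and $(2\ld a+p)v$ into the $\ld$-powered left-hand terms for $\ld\ge\ld_0$, yields a $\ld$-weighted Carleman inequality; undoing the conjugation — using $e^{-2\ld\varphi}|u|^2=|v|^2$, $e^{-2\ld\varphi}|\dd_\nu u|^2=|\dd_\nu v|^2$ on $\Sigma$, and expanding the weighted boundary quantities at $t=T$ in terms of $v$ so that the common lower-order pieces cancel across the two sides — and finally dividing by $\ld$ (legitimate as $\ld\ge\ld_0$) produces \eqref{boundarycarleman}.

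\textbf{Main obstacle.} The crux is that $t+x\cdot\og$ is a characteristic phase for $\Box$: it is not pseudo-convex, so the classical H\"ormander machinery yields nothing, and $[\Box,X]=0$, so there is no sub-principal bulk gain either. All of the coercivity must be extracted from the transport field $X$ and its tangency to the level sets, and the delicate point is to choose the multipliers so that this — in tandem with the null-line Poincar\'e inequality — produces a genuine power of $\ld$ to spare, enough to absorb the first- and zeroth-order perturbations $\mc P_1,p$ and the $\ld a$-term of $\mc L_{\mc A,q}$. Running alongside is the boundary-term bookkeeping: one must check that the $\{t=T\}$ contributions assemble into a non-negative quantity up to the data terms permitted on the right and that the lateral flux splits across $\Sigma_\pm(\og)$ with precisely the right signs.
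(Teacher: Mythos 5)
First, a point of reference: the paper does not prove this theorem. It is stated verbatim from Krishnan--Vashisth \cite{VM} (whose argument adapts Kian \cite{kian2}) with the explicit remark that the result is stated ``without proof''. So your proposal can only be measured against those references, and its architecture does coincide with theirs: conjugate by the characteristic weight, expand $\|\Box v+2\ld Xv\|_{L^2(Q)}^2$, convert the cross term $4\ld\int_Q(\Box v)(Xv)\,dxdt$ by integration by parts into the $t=T$ and $\Sigma_\pm(\og)$ boundary contributions (your signs, and your identification of the $t=T$ block as $2\ld\int_\Omega\big(|Xv(T)|^2+|\gd_xv(T)|^2-|\og\cdot\gd_xv(T)|^2\big)dx$, are correct), extract the bulk term $\ld^2\|v\|^2$ from $\ld^2\|Xv\|^2$ via the Poincar\'e inequality along the null lines of $X$, and recover $\|\gd_{(t,x)}v\|^2$ from an energy inequality.

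The gap is the absorption step, which you correctly name as the ``delicate point'' but then dispatch with ``absorbing $\mc P_1v$ and $(2\ld a+p)v$ into the $\ld$-powered left-hand terms for $\ld\ge\ld_0$''. There is no spare power of $\ld$ for either term: $\|\mc P_1v\|_{L^2(Q)}^2\le C(C_0)\|\gd_{(t,x)}v\|_{L^2(Q)}^2$ scales exactly like the gradient term on the left (which carries weight $\ld^0$), and $\|2\ld av\|_{L^2(Q)}^2\le 4\ld^2\|a\|_{L^\iy}^2\|v\|_{L^2(Q)}^2$ scales exactly like $\ld^2\|v\|^2$; absorption therefore hinges on the energy-inequality and null-line Poincar\'e constants beating $C_0$, which enlarging $\ld$ does not arrange. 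The standard cure --- and the reason the amplitude $e^{-\int_0^t(1,-\og)\cdot\mc A}$ appears in the geometric optics solutions --- is to build the integrating factor into the transport field, i.e.\ run the ODE/Poincar\'e argument for $Y=X+a$ and keep $2\ld Yv$ as a block, and then control the residual cross term $4\ld\int_Q(\Box v)(av)\,dxdt$ with explicit constants (na\"ive integration by parts here produces $O(\ld)\|\gd v\|^2$, which is again not absorbable). Your energy step has a similar looseness: as you half-observe, feeding $2\ld Xv$ into a Gronwall argument generates the cross term $2\ld\int(\og\cdot\gd_xv)\dd_tv$ and hence an $e^{C\ld T}$ loss; the clean route is to first read off $\|\Box v\|_{L^2(Q)}^2\le \m{RHS}$ from the quadratic identity and only then apply the energy inequality with $\Box v$ as a known source. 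Finally, a bookkeeping point: expanding $\dd_tu(T)=e^{\ld(T+x\cdot\og)}(\dd_tv(T)+\ld v(T))$ produces $\ld^3e^{-2\ld(T+x\cdot\og)}|u(T)|^2$ on the right rather than the stated $\ld^2$; this is harmless for Lemma 2 (both are swallowed by $e^{\beta\ld}$), but your claim that the ``common lower-order pieces cancel'' does not account for it.
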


	\begin{corollary}{1.} (\tb{Interior Carleman estimates})
		Given $(\mc{A},q)\in \mc{M}(C_0,\alpha)$ then there exist $C>0,\ld_0>0$ depending only on $C_0,\alpha$ and $Q$ such that the following estimate holds for $u\in C_c^{\iy}(Q)$ and $\ld\ge\ld_0$;
		\begin{align}
		\int_{Q} e^{-2\ld(t+x\cdot \og)}(\ld^2|u(t,x)|^2+|\nabla_{(t,x)}u(t,x)|^2)dxdt\le C\|e^{-\ld(t+x\cdot\og)}\mc{L}_{\mc{A},q}u\|_{{L}^2(Q)}^2.\nn
		\end{align}
	\end{corollary}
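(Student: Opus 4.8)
The plan is to obtain the Interior Carleman estimate as an immediate specialization of the Boundary Carleman estimate \eqref{boundarycarleman} to compactly supported test functions. First I would observe that any $u\in C_c^\iy(Q)$ extends (by zero) to a function in $C^\iy(\R\times\R^n)$, hence in particular $u\in C^2(\bar Q)$; moreover, since $\m{supp}\,u$ is a compact subset of the open cylinder $Q=(0,T)\times\Og$, the function $u$ and all of its derivatives vanish in a neighbourhood of $\dd Q$. In particular the hypotheses $u|_{t=0}=\dd_t u|_{t=0}=u|_{\Sigma}=0$ required in the Boundary Carleman estimate hold, so \eqref{boundarycarleman} may be applied to $u$ for the chosen $\og=\og_0$ --- indeed for every $\og\in\mb{S}^{n-1}$ --- with the same $\ld_0$ and $C$, which, recall, do not depend on $\og$.

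Next I would discard the boundary contributions, all of which vanish. Since $u\equiv 0$ near $\Sigma$ we have $\dd_\nu u=0$ on all of $\Sigma$, so both $\ld\int_{\Sigma_{+}(\og)}e^{-2\ld(t+x\cdot\og)}|\og\cdot\nu(x)||\dd_\nu u|^2\,dS$ on the left-hand side and $\ld\int_{\Sigma_{-}(\og)}e^{-2\ld(t+x\cdot\og)}|\og\cdot\nu(x)||\dd_\nu u|^2\,dS$ on the right-hand side are zero. Likewise, since $u\equiv 0$ near $t=T$, the traces $u(T,\cdot)$, $\dd_t u(T,\cdot)$ and $\gd_x u(T,\cdot)$ all vanish on $\Og$, killing the remaining $\Og$-integrals on both sides of \eqref{boundarycarleman}. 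What survives is exactly
\[
\int_{Q} e^{-2\ld(t+x\cdot\og)}\big(\ld^2|u(t,x)|^2+|\gd_{(t,x)}u(t,x)|^2\big)\,dxdt\ \le\ C\int_{Q} e^{-2\ld(t+x\cdot\og)}|\mc{L}_{\mc{A},q}u(t,x)|^2\,dxdt,
\]
and since $e^{-2\ld(t+x\cdot\og)}=\big(e^{-\ld(t+x\cdot\og)}\big)^{2}$ the right-hand side equals $C\,\|e^{-\ld(t+x\cdot\og)}\mc{L}_{\mc{A},q}u\|_{L^2(Q)}^2$, which is precisely the claimed inequality.

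I do not expect a genuine analytic obstacle: the whole content is the verification that elements of $C_c^\iy(Q)$ meet the regularity and vanishing requirements of the Boundary Carleman estimate and that compact support annihilates every boundary integral appearing in \eqref{boundarycarleman}. The only point worth stating carefully is that the constants in \eqref{boundarycarleman} are independent of $\og$, so the resulting interior estimate holds uniformly over $\og\in\mb{S}^{n-1}$. Should the companion estimate with the opposite weight $e^{+\ld(t+x\cdot\og)}$ be needed later, it follows from \eqref{boundarycarleman} after the substitution $\og\mapsto-\og$, $t\mapsto T-t$, though that is not required for the statement as given.
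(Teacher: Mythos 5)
Your proposal is correct and is exactly the intended derivation: the paper presents the interior estimate as a corollary of the boundary Carleman estimate \eqref{boundarycarleman}, obtained by noting that a function in $C_c^{\iy}(Q)$ satisfies the hypotheses $u|_{t=0}=\dd_tu|_{t=0}=u|_{\Sigma}=0$ and that compact support in the open cylinder annihilates every boundary term (the $\Sigma_{\pm}(\og)$ integrals and all the $t=T$ traces), leaving precisely the claimed inequality. Your remark that the constants are uniform in $\og$ is also consistent with the theorem's statement.
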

	\subsection{Construction of Geometric Optics Solutions}
	\label{subsec:2}  
	We make use of interior Carleman estimates and Hahn-Banach extension theorem to find the following parameter dependent solutions.

	\begin{theorem} {}
		Let $\phi\in C_c^{\iy}(\mb{R}^n)$ and $(\mc{A},q)\in \mc{M}(C_0,\alpha)$. Then there exists $C>0,\ld_0>0$ depending on $\phi,\ C_0,\ \alpha$ and $Q$ such that for $\ld\ge\ld_0$ 
		\[u(t,x)= e^{\lambda(t+x\cdot\og)}\big(\phi(x+t\og)   e^{-\int_{0}^{t}(1,-\omega)\cdot\mathcal{A}(s,x+(t-s)\omega) ds}+R_\lambda(t,x)\big)\] solves $\mathcal{L}_{\mathcal{A},q} u=0$, where $ \|R_\lambda\|_{H^k(Q)}\le C\lambda^{-1+k}\|{\phi}\|_{H^3(\mathbb{R}^n)}$ for $k\in\{0,1,2\}$.
	\end{theorem}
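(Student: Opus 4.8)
The plan is to construct the geometric optics solution by the standard conjugation-and-Carleman argument. Write $u(t,x)=e^{\lambda(t+x\cdot\og)}v(t,x)$; then $\mc{L}_{\mc{A},q}u=0$ is equivalent to a conjugated equation $\mc{L}_{\mc{A},q}^{\lambda}v=0$, where $\mc{L}_{\mc{A},q}^{\lambda}=e^{-\lambda(t+x\cdot\og)}\mc{L}_{\mc{A},q}\,e^{\lambda(t+x\cdot\og)}$. Expanding the squares, the top-order part of this operator is $2\lambda(\dd_t+\og\cdot\nabla_x)$ plus the d'Alembertian $(\dd_t^2-\Delta_x)$ plus first-order terms from $\mc{A}$ and the zeroth-order term $q$; in particular the leading transport operator is $2\lambda(\dd_t+\og\cdot\nabla_x)$, which is exactly the directional derivative along the null direction $(1,\og)$. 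First I would check that the explicit principal amplitude
\[
a(t,x)=\phi(x+t\og)\,e^{-\int_{0}^{t}(1,-\og)\cdot\mc{A}(s,x+(t-s)\og)\,ds}
\]
is annihilated, to leading order, by this transport operator: a direct computation shows $(\dd_t+\og\cdot\nabla_x)a = -\big((1,-\og)\cdot\mc{A}(t,x)\big)a$, which is precisely the first-order term needed so that $\mc{L}_{\mc{A},q}^{\lambda}a$ has no $O(\lambda)$ contribution. Hence $\mc{L}_{\mc{A},q}^{\lambda}a =: g_\lambda$ where, after collecting the remaining terms (the d'Alembertian of $a$, the lower-order $\mc{A}$-terms, and $qa$), one has $\|g_\lambda\|_{L^2(Q)}\le C$ uniformly in $\lambda$, with $C$ depending on $\phi$, $C_0$, $\alpha$ and $Q$; here one uses that $\mc{A}\in C_c^\infty(Q)$ with the a priori bound, $q\in L^\infty$, and that $\phi$ is fixed and smooth, so $a$ is smooth with all derivatives controlled. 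The correction term must satisfy $\mc{L}_{\mc{A},q}^{\lambda}R_\lambda=-g_\lambda$, and solving this with the right decay is the heart of the matter.

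The next step is the solvability estimate. I would apply the interior Carleman estimate of Corollary 1, but with $\og$ replaced by $-\og$ (equivalently, run the estimate for the formal transpose of the conjugated operator), which gives, for test functions $w\in C_c^\infty(Q)$,
\[
\|w\|_{L^2(Q)}\le \frac{C}{\lambda}\,\big\|(\mc{L}_{\mc{A},q}^{\lambda})^{*}w\big\|_{L^2(Q)}.
\]
By the Hahn–Banach theorem together with the Riesz representation theorem, this a priori inequality yields, for any $g_\lambda\in L^2(Q)$, a solution $R_\lambda\in L^2(Q)$ of $\mc{L}_{\mc{A},q}^{\lambda}R_\lambda=-g_\lambda$ with $\|R_\lambda\|_{L^2(Q)}\le \frac{C}{\lambda}\|g_\lambda\|_{L^2(Q)}\le C\lambda^{-1}\|\phi\|_{H^3}$. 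This is the $k=0$ bound. I would need to make the dependence of $g_\lambda$ on $\phi$ explicit: since $g_\lambda$ involves at most two derivatives of $a$, and $a$ depends on $\phi$ and one derivative structure of the integral term, the worst term $\Delta_x a$ costs two derivatives on $\phi$, giving the $H^3(\mb{R}^n)$ norm of $\phi$ after accounting for one extra derivative produced by differentiating the amplitude factor; I should double-check the exact exponent but $H^3$ is the safe choice used in the statement.

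For the higher-order bounds $k=1,2$ I would bootstrap via elliptic-type (or rather, energy) regularity for the conjugated hyperbolic operator, or alternatively differentiate the equation $\mc{L}_{\mc{A},q}^{\lambda}R_\lambda=-g_\lambda$ and re-apply the Carleman solvability estimate to the differentiated equations. Differentiating once in $(t,x)$ produces $\mc{L}_{\mc{A},q}^{\lambda}(\dd R_\lambda)=-\dd g_\lambda-[\dd,\mc{L}_{\mc{A},q}^{\lambda}]R_\lambda$; the commutator $[\dd,\mc{L}_{\mc{A},q}^{\lambda}]$ has top order $\lambda$ (coming from $\dd$ hitting the $2\lambda(\dd_t+\og\cdot\nabla_x)$ term's coefficients — actually those coefficients are constant, so the $\lambda$-order commutator comes only through derivatives of $\mc{A}$), so $\|[\dd,\mc{L}_{\mc{A},q}^{\lambda}]R_\lambda\|_{L^2}\le C\|R_\lambda\|_{H^1}+C\lambda\|R_\lambda\|_{L^2}$, and applying the solvability estimate gives $\|\dd R_\lambda\|_{L^2}\le \frac{C}{\lambda}(\|\dd g_\lambda\|_{L^2}+\|R_\lambda\|_{H^1}+\lambda\|R_\lambda\|_{L^2})$. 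Absorbing the $\|R_\lambda\|_{H^1}$ term for $\lambda$ large and using the $k=0$ bound yields $\|R_\lambda\|_{H^1(Q)}\le C\|\phi\|_{H^3}$, i.e. the $k=1$ case; iterating once more gives $k=2$ with a factor $\lambda$.

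\textbf{Main obstacle.} The principal difficulty is the bootstrap for $k=1,2$: one must track the $\lambda$-dependence through the commutators carefully and verify that the error terms are genuinely absorbed (not merely bounded) for $\lambda\ge\lambda_0$, and simultaneously keep the dependence on $\phi$ pinned to $\|\phi\|_{H^3(\mb{R}^n)}$. A cleaner route, which I would adopt if the commutator bookkeeping becomes unwieldy, is to observe that the conjugated operator differs from $2\lambda(\dd_t+\og\cdot\nabla_x)+\Box_{(t,x)}$ by lower-order smooth terms, solve the constant-coefficient model problem by an explicit integration along characteristics (or Fourier multiplier) analysis that directly gives all $H^k$ bounds with the stated powers of $\lambda$, and then treat the $\mc{A}$- and $q$-perturbations perturbatively via a Neumann series that converges for $\lambda\ge\lambda_0$. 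Either way, the smoothness and compact support of $\mc{A}$, together with the a priori bound in $\mc{M}(C_0,\alpha)$, are exactly what make all the perturbative error terms controllable uniformly in $\lambda$.
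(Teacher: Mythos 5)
Your overall skeleton matches the paper's for the amplitude and the $k=0$ bound: both conjugate by $e^{\ld(t+x\cdot\og)}$, observe that $B=\phi(x+t\og)e^{-\int_0^t(1,-\og)\cdot\mc{A}}$ solves the transport equation $(1,-\og)\cdot(\nabla_{(t,x)}B+\mc{A}B)=0$ so that the $O(\ld)$ term drops out, and then solve for the remainder by a Hahn--Banach/Riesz duality argument built on a Carleman estimate. Up to that point your argument is sound.

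The genuine gap is in your treatment of $k=1,2$. The duality construction converts an a priori inequality for \emph{test functions} into an \emph{existence} statement: it produces some solution of $P_{\mc{A},\ld,\og}R_\ld=-g_\ld$ obeying the norm bound dual to the Carleman estimate you fed in. It does not give an a priori estimate valid for the particular function $R_\ld$ you have just constructed, let alone for its derivatives: $\dd R_\ld$ is neither smooth nor compactly supported in $Q$, so Corollary 1 cannot be "re-applied" to it, and without a uniqueness statement you cannot identify $\dd R_\ld$ with the solution of the differentiated equation that the duality argument would hand you. So the commutator bootstrap as written cannot be closed. The paper sidesteps this entirely by using a \emph{shifted} Carleman estimate in negative-order weighted Sobolev spaces (Lemma 1, from Kian): $\|u\|_{H^{-1}_\ld(\R^{n+1})}\le C\|P_{\mc{A},\ld,\og}u\|_{H^{-2}_\ld(\R^{n+1})}$. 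Dualizing \emph{this} inequality places the remainder directly in $H^2_\ld(\R^{n+1})$ with $\|R_\ld\|_{H^2_\ld}\le C\|\mc{L}_{\mc{A},q}B\|_{H^1_\ld}\le C\ld\|\phi\|_{H^3}$, and unpacking the $\ld$-weighted norm yields all three bounds $\|R_\ld\|_{H^k}\le C\ld^{k-1}\|\phi\|_{H^3}$ at once, with no bootstrap. (The Neumann series in the paper plays a different, smaller role than in your alternative route: it is used only to absorb the zeroth-order term $q$, via $(I+qT)^{-1}$ with $\|T\|\le C/\ld$, not to treat the whole perturbation of a constant-coefficient model.) To repair your proof you would need either this negative-index estimate or a genuinely worked-out version of your model-operator alternative; as it stands, the higher-order regularity of $R_\ld$ is asserted rather than proved.
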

	
	\vspace*{4mm}
	
	\begin{proof}		
		Following exactly same set of arguments presented in \cite{kian1}, we can have similar Carleman estimates in negative order Sobolev spaces but with an additional index shift by -1. We state the result below. For a proof see \cite{kian1}.
		
		\begin{lemma}{1}{\textnormal{ (Lemma 5.4 of \cite{kian1})}}	
			For $(\mc{A},q)\in \mc{M}(C_0,\alpha)$ let us consider the conjugated operator $$P_{\mc{A},\ld,\og}=e^{-\ld(x\cdot\og+t)}(\mathcal{L}_{\mathcal{A},q}-q)e^{\ld(x\cdot\og+t)}.$$ There exists $C,\ld_0>0$ depending on $C_0,\alpha$ and $Q$ such that for $u\in C_c^{\iy}(Q)$ and $\ld\ge\ld_0$
			\begin{align*}
			\|u\|_{H^{-1}_{\ld}(\mb{R}^{n+1})}\le C\|P_{\mc{A},\ld,\og}u\|_{H^{-2}_{\ld}(\mb{R}^{n+1})}.
			\end{align*} 	
		\end{lemma}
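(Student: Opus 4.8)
The plan is to build the geometric optics solution by solving a transport equation to first order and absorbing the error into a remainder estimated via the negative-order Carleman estimate of Lemma 1. Write $\psi(t,x)=t+x\cdot\og$ and introduce the conjugated operator $P_{\mc{A},\ld,\og}$ from Lemma 1. We seek $u=e^{\ld\psi}(a_0+R_\ld)$ with $a_0(t,x)=\phi(x+t\og)\exp\!\big(-\int_0^t(1,-\og)\cdot\mc{A}(s,x+(t-s)\og)\,ds\big)$. The first step is the algebraic reduction: since $\mc{L}_{\mc{A},q}u=0$ is equivalent to $(P_{\mc{A},\ld,\og}+e^{-\ld\psi}qe^{\ld\psi})(a_0+R_\ld)=0$, and since $e^{-\ld\psi}qe^{\ld\psi}=q$ (the conjugation only shifts derivatives, not multiplication operators), this becomes $P_{\mc{A},\ld,\og}R_\ld = -P_{\mc{A},\ld,\og}a_0 - q a_0 - q R_\ld$. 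I would expand $P_{\mc{A},\ld,\og}a_0$ in powers of $\ld$: the $\ld^2$-terms cancel because $|\gd_{(t,x)}\psi|^2=2-|\og|^2\cdot$... more precisely $(\dd_t\psi)^2-\sum(\dd_{x_k}\psi)^2=1-|\og|^2=0$, and the $\ld^1$-terms form a transport operator $2(1,-\og)\cdot\gd_{(t,x)}+\ 2(1,-\og)\cdot\mc{A}$ (using that the principal symbol along $(1,-\og)$ is what survives), which annihilates $a_0$ precisely by the choice of the integrating factor, i.e. $a_0$ solves $\big((1,-\og)\cdot\gd_{(t,x)}+(1,-\og)\cdot\mc{A}\big)a_0=0$ with $a_0|_{t=0}=\phi$. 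What remains after these cancellations is an $O(1)$ term $F_\ld:=-\ld^0\text{-part}-qa_0$ which, crucially, is independent of $\ld$ up to terms we can estimate, with $\|F_\ld\|_{H^1(Q)}\lesssim\|\phi\|_{H^3(\mb{R}^n)}$ (three derivatives on $\phi$ appear because the $O(1)$ part of $P_{\mc{A},\ld,\og}a_0$ contains $\Box a_0$, two derivatives, and we want an $H^1$ bound on the source, hence one more).

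The second step is to solve $P_{\mc{A},\ld,\og}R_\ld = F_\ld - qR_\ld$ for $R_\ld$ small. I would set this up by a duality/Hahn–Banach argument exactly as in \cite{kian1}: Lemma 1 gives, for all $v\in C_c^\iy(Q)$, the estimate $\|v\|_{H^{-1}_\ld}\le C\|P_{\mc{A},\ld,\og}v\|_{H^{-2}_\ld}$; by the standard transposition argument this yields existence of a solution to $P_{\mc{A},\ld,\og}^*w=G$ with $\|w\|_{H^{1}_\ld}\le C\|G\|_{H^{2}_\ld}$ — wait, one must be careful about which operator and which direction; the correct statement is that solvability of $P_{\mc{A},\ld,\og}R=F$ with a gain follows from the a priori estimate for the formal adjoint, and the $\ld$-dependent norms give the decay. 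Concretely, since $\|\cdot\|_{H^2_\ld}\le C\ld^2\|\cdot\|_{L^2}$-type comparisons hold, the estimate $\|v\|_{H^{-1}_\ld}\le C\|P^* v\|_{H^{-2}_\ld}$ transposed produces $R_\ld$ with $\|R_\ld\|_{H^1_\ld}\le C\|F_\ld\|_{H^2_\ld}$, hence after converting semiclassical to ordinary Sobolev norms and accounting for the index shift, $\|R_\ld\|_{H^0(Q)}\le C\ld^{-1}\|F_\ld\|\le C\ld^{-1}\|\phi\|_{H^3}$. For $k=1,2$ one either differentiates the equation and reruns the argument, or interpolates/uses elliptic-type regularity in the $\ld$-dependent calculus, picking up one factor of $\ld$ per derivative, giving $\|R_\ld\|_{H^k(Q)}\le C\ld^{k-1}\|\phi\|_{H^3}$. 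Finally one handles the zeroth-order perturbation $qR_\ld$ by a Neumann-series/fixed-point argument: the map $R\mapsto P_{\mc{A},\ld,\og}^{-1}(F_\ld-qR)$ is a contraction on a small ball of $H^1_\ld$ once $\ld_0$ is large, because $\|P^{-1}(q\,\cdot)\|_{H^1_\ld\to H^1_\ld}\le C\ld^{-1}\|q\|_{L^\iy}<1/2$ for $\ld\ge\ld_0$.

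\textbf{Main obstacle.} The delicate point is the bookkeeping of the $\ld$-weighted (semiclassical) Sobolev norms $H^s_\ld(\mb{R}^{n+1})$ together with the ``index shift by $-1$'' flagged in the statement: one must verify that the source term $F_\ld$ genuinely lies in $H^2_\ld$ uniformly in $\ld$ (not merely $L^2_\ld$), which is why $a_0$ — and hence $\phi$ — needs the extra regularity, and that transposition of Lemma 1 really yields a solution operator with the claimed gain rather than merely an existence statement; the conversion between $\|\cdot\|_{H^k_\ld}$ and $\|\cdot\|_{H^k(Q)}$ must be tracked carefully to land on the exponents $\ld^{-1+k}$ for $k=0,1,2$. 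A secondary technical issue is that $a_0$ and $R_\ld$ are a priori defined on all of $\mb{R}^{n+1}$ via the transport flow and the free extension, and one must cut off appropriately so that $u$ restricted to $Q$ solves the equation there; since $\phi$ is compactly supported this is routine but needs a sentence. Everything else — the cancellation of the $\ld^2$- and $\ld^1$-terms, the transport equation for $a_0$, the contraction argument for $q$ — is a direct computation modeled on \cite{kian1}.
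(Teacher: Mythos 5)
The statement you were asked to prove is Lemma 1 itself: the Carleman estimate in negative-order $\ld$-weighted Sobolev spaces, $\|u\|_{H^{-1}_{\ld}(\mb{R}^{n+1})}\le C\|P_{\mc{A},\ld,\og}u\|_{H^{-2}_{\ld}(\mb{R}^{n+1})}$ for $u\in C_c^{\iy}(Q)$ and $\ld\ge\ld_0$. Your proposal never addresses this inequality. Instead it takes Lemma 1 as a known input (``Lemma 1 gives, for all $v\in C_c^{\iy}(Q)$, the estimate \dots'') and proceeds to construct the geometric optics solutions and the remainder bounds $\|R_\ld\|_{H^k(Q)}\le C\ld^{-1+k}\|\phi\|_{H^3(\mb{R}^n)}$. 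That is the content of the Theorem in Subsection 2.1 of the paper, a different statement; your outline of that construction does broadly parallel the paper's own argument (transport equation for the amplitude, Hahn--Banach/duality solvability deduced from the negative-order estimate, Neumann series to absorb the zeroth-order term $q$), but as a proof of Lemma 1 it is circular: the analytic core --- why the conjugated operator obeys an a priori bound with the stated shift of one Sobolev index in the semiclassical scale --- is nowhere established.

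For the record, the paper does not reprove Lemma 1 either; it cites Lemma 5.4 of \cite{kian1}, noting only the additional index shift by $-1$. A genuine blind proof would have to supply that argument: starting from the weighted $L^2$ Carleman estimate (Corollary 1, or its proof for the free conjugated operator), apply it to the regularized function obtained from $u$ via the Fourier multiplier $(\ld^2+|\tau|^2+|\xi|^2)^{-1/2}$, insert suitable cut-offs so that one stays with admissible test functions, and control the commutators of this multiplier and the cut-offs with the first-order perturbation generated by $\mc{A}$ (here the a priori bounds from $\mc{M}(C_0,\alpha)$ and the compact support of $\mc{A}$ and $u$ are used), absorbing all error terms for $\ld\ge\ld_0$. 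None of these steps appear in your write-up, so with respect to the assigned statement the proposal has a genuine gap: it proves (in outline) the consequence, not the lemma.
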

		
		
		\noindent 
		For $f\in H^1_\ld(Q)$, we define the linear map $\mc{S}:\{P_{-\mc{A},-\ld,\og}v;\ v\in C_c^{\iy}(Q)\}\to\R$ by
		\begin{align}
		\mc{S}(P_{-\mc{A},-\ld,\og}v)=\int_{Q}vf\ dxdt.\label{definition}
		\end{align}
		Now we use the above Lemma to conclude $\mc{S}$ is continuous that is,
		\begin{align}
		|\mc{S}(P_{-\mc{A},-\ld,\og}v)|\le \|P_{-\mc{A},-\ld,\og}v\|_{H^{-2}_{\ld}(\mb{R}^{n+1})}\|f\|_{H^{1}_{\ld}(Q)}.
		\end{align}
		Now by Hahn-Banach theorem we can extend $\mc{S}$ as a continuous functional on $H^{-2}_{\ld}(\mb{R}^{n+1})$ still denoted as $\mc{S}$ and satisfies $\|\mc{S}\|\le\|f\|_{H^{1}_{\ld}(Q)} $. But by Reis\'z representation theorem, we have a unique $u\in{H^{2}_{\ld}(\R^{n+1})}$ such that the following holds  
		\begin{align}
		\mc{S}(v)=(v,u)_{{H^{-2}_{\ld}(\R^{n+1})},{H^{2}_{\ld}(\R^{n+1})}}.\label{exist1}
		\end{align}
		Combining \eqref{definition} and \eqref{exist1}  we get for all $v\in C_c^{\iy}(Q)$
		\begin{align}
		\mc{S}(P_{-\mc{A},-\ld,\og}v)=\int_{Q}vf\ dxdt=(P_{-\mc{A},-\ld,\og}v,u)_{{H^{-2}_{\ld}(\R^{n+1})},{H^{2}_{\ld}(\R^{n+1})}}.\label{exist2}
		\end{align}
		From \eqref{exist2},
		\begin{align}
		u\in{H^{2}_{\ld}(\R^{n+1})}\m{ satisfies }P_{\mc{A},\ld,\og}u=f\m{ with  }\|u\|_{{H^{2}_{\ld}(\R^{n+1})}}\le \|\mc{S}\|\le\|f\|_{H^{1}_{\ld}(Q)}.\nn
		\end{align}
		Now we want solutions of relativistic Schr\"odinger equation of the following form
		\begin{align}
		&\qd \qd u(t,x)=e^{\ld(x\cdot\og+t)}(B(t,x)+R_{\ld}(t,x)).\nn
		\end{align}
		That is, we should find $B$ and $R_{\ld}$ so that, $ e^{-\ld(x\cdot\og+t)}\mathcal{L}_{\mathcal{A},q}e^{\ld(x\cdot\og+t)}(B(t,x)+R_{\ld}(t,x))=0,$ 
		
		\vspace{2mm}
		
		\noindent or, $ P_{\mathcal{A},\ld,\og}R_{\ld}=-qR_{\ld}-P_{\mathcal{A},\ld,\og}B-qB=-qR_{\ld}-2\ld(1,-\og)\cdot(\nabla_{(t,x)}B+\mc{A}B)-\mc{L}_{\mc{A},q}B.$
		
		\vspace{2mm}
		
		\noindent If we take  $B(t,x)=\phi(x+t\og)e^{-\int_{0}^{t}(1,-\og)\cdot\mc{A}(s,x+(t-s)\og)ds}$, it satisfies
		\[(1,-\og)\cdot(\nabla_{(t,x)}B+\mc{A}B)(t,x)=0.\]		
		Since $\mc{L}_{\mc{A},q}B\in H^1_\ld(Q)$ with $\|\mc{L}_{\mc{A},q}B\|_{H^1_\ld(Q)}\le C\ld\|\phi\|_{H^3(\R^n)}$ it suffices to find 
		\begin{align} R_\ld\in {H^2(Q)} \m{  satisfying } (P_{\mathcal{A},\ld,\og}+q)R_{\ld}=-\mc{L}_{\mc{A},q}B.\label{exist3}
		\end{align}
		Define $T(f)=u$ where $P_{\mc{A},\ld,\og}u=f$. Then $\|T\|_{H^1_\ld(Q)\to H^1_\ld(Q)}\le \f{C}{\ld}$.\\
		Thus the problem \eqref{exist3} reduces to finding $\tilde{f}\in H^1_\ld(Q)$ such that \[(I+qT)\tilde{f}=-\mc{L}_{\mc{A},q}B.\]
		For large enough $\ld$, we have invertibility of $(I+qT)$ in $H^1_\ld(Q)$. So, we can find $\tilde{f}\in H^1_\ld(Q)$ and hence $R_\ld\in H^2_\ld(Q)$ which satisfies 
		\begin{align} 
		\label{remainder}\|R_\ld\|_{H^2_\ld(Q)}\le C\|\tilde{f}\|_{H^1_\ld(Q)}\le C\ld\|\phi\|_{H^3(Q)}.
		\end{align}
		For future purposes we write \eqref{remainder} in a different way which is, 
		\[\|R_\lambda\|_{H^k(Q)}\le C\lambda^{-1+k}\|{\phi}\|_{H^3(\mathbb{R}^n)} \m{ for }\ k\in\{0,1,2\}.\]
		This ends the construction of geometric optics solutions.
	\end{proof}
	
	\section{Proof of the main theorem}
	\label{sec:3}
	\subsection{{Stability estimate for vector potential}} \label{subsec:1}%
	We outline the proof as follows. Using Green's formula and geometric optics solutions constructed earlier we will establish estimates connecting vector potential and input-output operator, while doing so we will crucially use boundary Carleman estimate. Then we estimate the line integrals of a component of the vector potential along the direction of light-rays by the input-output operator. We include these in Lemma 3 and end up deriving a Fourier estimate as a corollary. Then under the divergence free condition of vector potential we will stably recover all components of the vector potential. We use Vessella's conditional stability result \cite{vassella}  to obtain Fourier estimate for small frequencies.\\

	\begin{lemma}{2.}{ \bf{(Integral identity and estimates)}} \label{integralidentityestimates}
		For $i=1,2$ let $(\mc{A}_i,q_i)\in \mc{M}(C_0,\alpha)$ with $\phi\in C_c^{\iy}(\mb{R}^n)$ and $\ \og\in\mb{S}^{n-1}$ satisfy    $|\og-\og_0|<\f{\epsilon}{2}$. Then there exists $\beta>0,C>0,\ld_0>0$ which depend on $C_0,\alpha$ and $Q$ such that for all $\lambda\ge\lambda_0$
		\begin{align}
		\left\vert\int_{0}^{T}\int_{\mathbb{R}^n}(1,-\omega)\cdot(\mathcal{A}_2-\mc{A}_1)(t,y-t\omega) e^{-\int_{0}^{t}{(1,-\omega)\cdot(\mathcal{A}_2-\mc{A}_1)(s,y-s\omega)ds}} |\phi(y)|^2\ dydt\right\vert & \nn\\
		\le C\left(\frac{1}{\sqrt{\lambda}}+ e^{\beta\lambda}\|\Lambda_1-\Lambda_2\|_{*}\right)\|\phi\|_{H^3(\mathbb{R}^n)}^2.\nn
		\end{align}	
	\end{lemma}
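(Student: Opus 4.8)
The plan is to test the relativistic Schr\"odinger equation against a pair of geometric optics solutions carrying reciprocal large phases, chosen so that the product of their leading amplitudes reproduces the quantity in the statement, and then to read the right-hand side off from the boundary.

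\textbf{Step 1: two geometric optics solutions.} Fix $\og\in\mb{S}^{n-1}$ with $|\og-\og_0|<\ep/2$. Using the geometric optics construction of Section~\ref{subsec:2} I would build
\[u_2(t,x)=e^{\ld(t+x\cdot\og)}\bigl(\Phi_2(t,x)+R_{2,\ld}(t,x)\bigr),\qquad \mc{L}_{\mc{A}_2,q_2}u_2=0\ \text{in }Q,\]
with $\Phi_2(t,x)=\phi(x+t\og)\,e^{-\int_0^t(1,-\og)\cdot\mc{A}_2(s,x+(t-s)\og)\,ds}$ and $\|R_{2,\ld}\|_{H^k(Q)}\le C\ld^{-1+k}\|\phi\|_{H^3(\mb{R}^n)}$, $k\in\{0,1,2\}$. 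Running the same argument with the phase sign reversed — equivalently, after the substitution $t\mapsto T-t$, $\og\mapsto-\og$, which turns the Carleman weight $e^{-\ld(t+x\cdot\og)}$ into $e^{\ld(t+x\cdot\og)}$ — and choosing the free function in the transport equation suitably, I would obtain a solution of the adjoint equation $\mc{L}_{-\mc{A}_1,q_1}v=0$ of the form $v(t,x)=e^{-\ld(t+x\cdot\og)}\bigl(\Psi(t,x)+\widetilde R_{1,\ld}(t,x)\bigr)$, with $\Psi(t,x)=\overline{\phi}(x+t\og)\,e^{\int_0^t(1,-\og)\cdot\mc{A}_1(s,x+(t-s)\og)\,ds}$ and $\|\widetilde R_{1,\ld}\|_{H^k(Q)}\le C\ld^{-1+k}\|\phi\|_{H^3}$. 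The point of these choices is that the two exponential weights are reciprocal, so they cancel in any product $u_2v$, while the amplitudes multiply to $\Phi_2\Psi=|\phi(x+t\og)|^2\,e^{-\int_0^t(1,-\og)\cdot(\mc{A}_2-\mc{A}_1)(s,x+(t-s)\og)\,ds}$.

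\textbf{Step 2: the integral identity.} Let $\widetilde u_1$ solve $\mc{L}_{\mc{A}_1,q_1}\widetilde u_1=0$ with the same Cauchy data as $u_2$, namely $(\widetilde u_1|_{t=0},\dd_t\widetilde u_1|_{t=0},\widetilde u_1|_\Sigma)=(u_2|_{t=0},\dd_t u_2|_{t=0},u_2|_\Sigma)$, and set $w:=u_2-\widetilde u_1$, so that $w|_{t=0}=\dd_t w|_{t=0}=w|_\Sigma=0$ and
\[\mc{L}_{\mc{A}_1,q_1}w=(\mc{L}_{\mc{A}_1,q_1}-\mc{L}_{\mc{A}_2,q_2})u_2=2\Bigl((A_0^{(1)}-A_0^{(2)})\dd_t-\sum_{k=1}^n(A_k^{(1)}-A_k^{(2)})\dd_{x_k}\Bigr)u_2+Vu_2,\]
where $V$ is bounded in terms of $C_0,\al$. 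Multiplying by $v$ and integrating by parts over $Q$ (Green's formula for $\mc{L}_{\mc{A}_1,q_1}$ with adjoint $\mc{L}_{-\mc{A}_1,q_1}$, using $\mc{L}_{-\mc{A}_1,q_1}v=0$, that $\mc{A}_1$ is supported away from $\dd Q$, and the homogeneous conditions on $w$), the only surviving boundary contributions are $\int_\Sigma(\dd_\nu w)v\,dS$ and $\int_\Og\bigl((\dd_t w)(T)v(T)-w(T)(\dd_t v)(T)\bigr)dx$. On the interior side the exponential weights cancel and the term of order $\ld$ equals $2\ld(1,-\og)\cdot(\mc{A}_1-\mc{A}_2)(\Phi_2+R_{2,\ld})(\Psi+\widetilde R_{1,\ld})$; dividing by $2\ld$ and moving to the right all the terms carrying a factor $R_{2,\ld}$, $\widetilde R_{1,\ld}$, a tangential derivative of $\Phi_2$, or $V$ — which are $O(\ld^{-1}\|\phi\|_{H^3}^2)$ by the remainder bounds, Sobolev embedding and the a priori bounds — I arrive at
\[\int_Q(1,-\og)\cdot(\mc{A}_1-\mc{A}_2)\,\Phi_2\Psi\,dx\,dt=\frac{1}{2\ld}\Bigl(\int_\Sigma(\dd_\nu w)v\,dS+\int_\Og\bigl((\dd_t w)(T)v(T)-w(T)(\dd_t v)(T)\bigr)dx\Bigr)+O(\ld^{-1}\|\phi\|_{H^3}^2).\]

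\textbf{Step 3: the boundary terms, and the main obstacle.} Since $\widetilde u_1$ has the Cauchy data of $u_2$, the pair $\bigl(\dd_\nu w|_{\Sigma_{-,\ep/2}(\og_0)},\,w(T,\cdot)\bigr)$ is $(\Lambda_2-\Lambda_1)$ applied to that Cauchy data, whose $H^1(\Og)\times L^2(\Og)\times H^1(\Sigma)$ norm is $\le Ce^{\beta\ld}(1+\ld)^2\|\phi\|_{H^3}$ because $e^{\ld(t+x\cdot\og)}\le e^{\beta\ld}$ on $\overline Q$ with $\beta$ depending only on $Q$; together with $\|v\|_{L^2(\Sigma)}+\|v(T)\|_{L^2(\Og)}+\|\dd_t v(T)\|_{L^2(\Og)}\le Ce^{\beta\ld}(1+\ld)\|\phi\|_{H^3}$ this bounds the contributions of $w(T)$ and of $\dd_\nu w$ over $\Sigma_{-,\ep/2}(\og_0)$ by $Ce^{\beta\ld}\|\Lambda_1-\Lambda_2\|_*\|\phi\|_{H^3}^2$. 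The delicate pieces are $\dd_\nu w$ over $\Sigma\setminus\Sigma_{-,\ep/2}(\og_0)=(0,T)\times\{\nu\cdot\og_0\ge\ep/2\}$ and $\dd_t w(T,\cdot)$, which are genuinely not supplied by the input–output operator; here I would apply the boundary Carleman estimate~\eqref{boundarycarleman} to $w$ (which satisfies exactly the required homogeneous conditions). Its left-hand side controls $\ld\int_{\Sigma_+(\og)}e^{-2\ld(t+x\cdot\og)}|\og\cdot\nu|\,|\dd_\nu w|^2$ and $\ld\int_\Og e^{-2\ld(T+x\cdot\og)}|\dd_t w(T)|^2$, while its right-hand side involves $\|e^{-\ld(t+x\cdot\og)}\mc{L}_{\mc{A}_1,q_1}w\|_{L^2(Q)}^2\le C\ld^2\|\phi\|_{H^3}^2$ (differentiating the phase of $u_2$ costs exactly one power of $\ld$), together with $w(T,\cdot)$, $\nabla_x w(T,\cdot)$ and $\dd_\nu w|_{\Sigma_-(\og)}$, all controlled by $\Lambda_1-\Lambda_2$ since $\Sigma_-(\og)\subset\Sigma_{-,\ep/2}(\og_0)$ when $|\og-\og_0|<\ep/2$. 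Pairing against $v$ by Cauchy–Schwarz with the weight $e^{-2\ld(t+x\cdot\og)}|\og\cdot\nu|$ (on $\{\nu\cdot\og_0\ge\ep/2\}$ one has $\og\cdot\nu>0$ bounded below — again because $|\og-\og_0|<\ep/2$ — and the reciprocal weight makes $e^{2\ld(t+x\cdot\og)}|v|^2=|\Psi+\widetilde R_{1,\ld}|^2$ bounded in $L^2(\Sigma)$ uniformly in $\ld$ by $C\|\phi\|_{H^3}$), the $\ld^2$ coming from $\mc{L}_{\mc{A}_1,q_1}w$, divided by the $\ld$ on the Carleman left-hand side and the $2\ld$ of Step 2, produces the decaying term $\le C\ld^{-1/2}\|\phi\|_{H^3}^2$, while the $\Lambda$-dependent part contributes another $\le Ce^{\beta\ld}\|\Lambda_1-\Lambda_2\|_*\|\phi\|_{H^3}^2$. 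Collecting everything and substituting $y=x+t\og$ (Jacobian one, extending the $y$-integral to all of $\mb{R}^n$ since $\mc{A}_1-\mc{A}_2\in C_c^\iy(Q)$), the left-hand side of the displayed identity becomes the integral in the statement up to an overall sign absorbed by the absolute value, and the right-hand side is $\le C\bigl(\ld^{-1/2}+e^{\beta\ld}\|\Lambda_1-\Lambda_2\|_*\bigr)\|\phi\|_{H^3}^2$, valid for all $\ld\ge\ld_0$. I expect Step 3 to be the main obstacle: extracting $\dd_\nu w$ on the unobserved part of $\Sigma$ and $\dd_t w(T,\cdot)$ from the boundary Carleman estimate with exactly the powers of $\ld$ that survive division by $2\ld$ is the crux, and it hinges both on the Carleman weights being reciprocal to the phase of $v$ and on $\og\cdot\nu$ staying away from $0$ on the unobserved boundary, which is precisely where $|\og-\og_0|<\ep/2$ enters.
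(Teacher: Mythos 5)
Your proposal is correct and follows essentially the same route as the paper: the same pair of reciprocal-phase geometric optics solutions, the same auxiliary solution with matching Cauchy data so that $w$ satisfies the homogeneous conditions required by the boundary Carleman estimate, the same splitting of the boundary terms into the part read off from $\Lambda_1-\Lambda_2$ and the part ($\partial_\nu w$ on $\Sigma_{+,\epsilon/2}(\omega_0)\subseteq\Sigma_+(\omega)$ and $\partial_t w(T,\cdot)$) absorbed by the Carleman estimate, followed by division by $\lambda$ and the change of variables $y=x+t\omega$. The balance of powers of $\lambda$ you track in Step 3 is exactly the one the paper uses to produce the $\lambda^{-1/2}$ term.
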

	\begin{proof}
		For $u,v\in H^{2}(Q)$ with $u|_{t=0}=\dd_tu|_{t=0}=u|_\Sigma=0$ and $\mc{A}_1,\mc{A}_2\in C_c^{\iy}(Q)^{n+1}$ Green's formula gives	
		\begin{align}
		\int_{Q}(\mathcal{L}_{\mathcal{A}_1,q_1}(u)\overline{v} -u\overline{\mathcal{L}_{\mathcal{-A}_1,\overline{q}_1}v})\ dx dt &= \int_{\Omega}\left(\partial_tu(T,x)\overline{v(T,x)}-u(T,x)\overline{\partial_tv(T,x)}\right)dx\nn\\&\qd  +\int_\Sigma\partial_\nu u(t,x)\overline{v(t,x)}\ dS.\label{Green1}
		\end{align}
		Now we consider geometric optics solutions corresponding to  $\mathcal{L}_{\mathcal{A}_2,q_2}$ and $\mathcal{L}_{-\mathcal{A}_1,\bar{q}_1}$ which we denote by $u_2(t,x)$ and $v(t,x)$ respectively. To cancel the exponential terms, we simultaneously consider exponentially growing and decaying geometric optics solutions. That is, there exists $R^1_{\ld},R^2_{\ld}\in H^2(Q)$	such that for all $\ld\ge\ld_0$
		\begin{align}
		\label{CGO1}&u_2(t,x)=e^{\lambda(t+x\cdot\og)}\left(\phi(x+t\og) e^{-\int_{0}^{t}(1,-\omega)\cdot\mathcal{A}_2(s,x+(t-s)\omega) ds}+R_\lambda^2(t,x)\right),\\
		\label{CGO2}&v(t,x)= e^{-\lambda(t+x\cdot\og)}\left(\phi(x+t\og) e^{\int_{0}^{t}(1,-\omega)\cdot\mathcal{A}_1(s,x+(t-s)\omega) ds}+R_\lambda^1(t,x)\right),\\
		\label{CGO3}&\m{ and }\|R_\lambda^i\|_{H^k(Q)}\le C\lambda^{-1+k}\|{\phi}\|_{H^3(\mathbb{R}^n)};\m{ for }i=1,2\m{ and }k\in\{0,1,2\}.
		\end{align}
		Now taking the initial and boundary data same as $u_2$ we solve the following IBVP and denote its unique solution by $u_1$ i.e.
		\[\begin{cases}
		\mathcal{L}_{\mathcal{A}_1,q_1}u_1=0, \\
		u_1(0,\cdot)=u_2(0,\cdot),\ \dd_tu_1(0,\cdot)=\dd_tu_2(0,\cdot)\ \m{ and }u_1|_{\Sigma}=u_2|_{\Sigma}.	\end{cases}\]
		
		\noindent Let $\mc{A}_i(t,x)\equiv\left(A_{i,k}(t,x)\right)_{0\le k\le n}$ for $i=1,2$ and define $u=u_1-u_2$ in $Q$, then $u$ solves the following problem		
		\begin{align}
		&\mathcal{L}_{\mathcal{A}_1,{q_1}}u(t,x)=(2\mathcal{A}\cdot(\partial_t,-\nabla_x)u_2+\tilde{q}u_2)(t,x),\label{substraction}\\
		&u(0,\cdot)=\partial_tu(0,\cdot)=0\ \m{and } u|_\Sigma=0.\nn
		\end{align}			
		\vspace{-1cm}	
		\begin{align*}\m{where,} \ &\mathcal{A}(t,x)=(\mathcal{A}_2-\mathcal{A}_1)(t,x)\equiv(A_{k}(t,x))_{0\le k\le n}. \\
		&\tilde{q}_i(t,x)=\left(\partial_tA_{i,0}-\sum_{k=1}^n\dd_{x_k} A_{i,k}+{|A_{i,0}|}^2-{\sum_{k=1}^{n}|A_{i,k}|}^2+q_i\right)(t,x),\m{ for }i\in\{1,2\}.\\   &\tilde{q}(t,x)=\left(\tilde{q}_1-\tilde{q}_2\right)(t,x).
		\end{align*}	
		Now we make use of \eqref{Green1} and \eqref{substraction} to get the following integral identity; 
		\begin{align}
		\int_{Q}(2\mathcal{A}\cdot(\partial_t,-\nabla_x)u_2+\tilde{q}u_2)\overline{v}\ dxdt =&\int_{\Omega}\left(\partial_tu(T,x)\overline{v(T,x)}-u(T,x)\overline{\partial_tv(T,x)}\right)dx\nn\\&\  +\int_\Sigma\partial_\nu u(t,x)\overline{v(t,x)}\ dS\label{intidentity}.
		\end{align}
		We wish to modify the integral identity \eqref{intidentity} into an estimate connecting line integrals and input-output operator. We substitute \eqref{CGO1} and \eqref{CGO2} into \eqref{intidentity}. We have
		\begin{align}
		&(\dd_t,-\nabla_x)u_2(t,x)=e^{\lambda(t+x\cdot\omega)}\left(\lambda(1,-\og)\phi(x+t\omega) e^{-\int_{0}^{t}(1,-\omega)\cdot\mathcal{A}_2(s,x+(t-s)\omega) ds}+w_\lambda^2(t,x)\right).\label{derivativeCGO1}\\
		\m{Also, }&\partial_tv(t,x)=e^{-\lambda(t+x\cdot\omega)}\left(\lambda\phi(x+t\omega) e^{\int_{0}^{t}{(1,-\omega)\cdot\mathcal{A}_1(s,x+(t-s)\omega) ds}}+w_\lambda^1(t,x)\right).\label{derivativeCGO2}
		\end{align}
		For $i=1,2$ we see the terms $w^i_{\ld}$ involve derivatives of $\phi$ and $R_{\ld}^i$. For some $C>0$ we arrive at the following estimate using \eqref{CGO3} 
		$$\|w^i_\ld\|_{H^k(Q)}\le C\ld^{k}\|\phi\|_{H^3(\R^n)}\ \ \m{ for }k\in\{0,1\}\m{ and }i\in\{1,2\}.$$ 
		We use \eqref{derivativeCGO1} and \eqref{derivativeCGO2} to get the following relation which will be helpful later as well  
		\begin{align}
		&\mathcal{A}(t,x)\cdot(\partial_t,-\nabla_x)u_2(t,x)\overline{v(t,x)}\nn\\
		&=\left(\ld\mc{A}(t,x)\cdot(1,-\og)|\phi(x+t\omega)|^2 e^{-\int_{0}^{t}(1,-\omega)\cdot\mathcal{A}(s,x+(t-s)\omega) ds}+p_\lambda(t,x)\right)\label{CGO4},\\
		&\m{where, }\ \|p_\ld\|_{L^{1}(Q)}\le C\|{\phi}\|_{H^3(\mathbb{R}^n)}, \  (\m{using a-priori bounds on}\ \mc{A}).\nn
		\end{align}
		Now to estimate R.H.S of \eqref{intidentity} we use explicit bounds for $v$ and the boundary Carleman estimate for $u$ to $\mathcal{L}_{\mathcal{A}_1,q_1}$. We also have
		$$\left(\Lambda_1-\Lambda_2\right)\left(u_2|_{t=0},\dd_t u_2|_{t=0},u_2|_{\Sigma}\right)=\left(\dd_\nu u|_{\Sigma_{-,\epsilon/2}{(\og_0)}},u(T,\cdot)\right).$$
		From \eqref{CGO1} and \eqref{CGO3} and using trace theorem we get $\beta>0$ and $C>0$ such that
		\begin{align}
		&\big\Vert \left(u_2|_{t=0},\dd_t u_2|_{t=0},u_2|_{\Sigma}\right)\big\Vert_{(H^1(\Omega),L^2(\Omega),H^1(\Sigma))}\le Ce^{\beta\ld}\|\phi\|_{H^3(\R^n)}.\label{inputoutput2}\\
		\m{From }\eqref{regularity}\m{ we have, }& \|\dd_\nu u\|_{L^2({\Sigma_{-,\epsilon/2}(\og_0)})},\|u|_{t=T}\|_{H^1(\Omega)}\le Ce^{\beta\ld}\|\Lambda_1-\Lambda_2\|_{*}\|\phi\|_{H^3(\R^n)}.\label{boundaryestimate}
		\end{align}
		Let $\mc{K}$ be the RHS of Boundary Carleman estimate \eqref{CGO1} corresponding to $\mc{L}_{\mc{A}_1,q_1}$ applied on $u$,  
		\begin{align}
		\mc{K}=&\int\limits_{Q}{ e^{-2\lambda(t+x\cdot\omega)}|\mathcal{L}_{\mathcal{A}_1,q_1}u(t,x)|^2dxdt} +\lambda\int_{\Sigma_{-}(\omega)}{ e^{-2\lambda(t+x\cdot\omega)}|\omega\cdot \nu (x)||\partial_\nu u|^2}dS\nn\\
		&\qd+\int\limits_{\Omega} e^{-2\lambda(t+x\cdot\omega)}\left(\ld^2|u(T,x)|^2+\ld|\nabla_x{u(T,x)}|^2\right)\ dx.\nn
		\end{align}	
		We use \eqref{substraction}, \eqref{derivativeCGO1} and a-priori bounds of potentials to get 
		\begin{align}
		\int_{Q}{e^{-2\lambda(t+x\cdot\omega)}|\mathcal{L}_{\mathcal{A}_1,q_1}u|^2dxdt}\le C\lambda^2\|\phi\|_{H^3(\mathbb{R}^n)}^2. 
		\end{align}
		We use continuity of input-output operator defined in \eqref{Defn} and estimates from \eqref{boundaryestimate} to get 
		\begin{align}
		\int\limits_{\Omega}{ e^{-2\lambda(T+x\cdot\omega)}(\ld^2|u(T,x)|^2+\ld|\nabla_x{u(T,x)}|^2}dx)\le C e^{\beta\lambda}\|\Lambda_1-\Lambda_2\|_{*}^2\|\phi\|_{H^3(\mathbb{R}^n)}^2.\label{finaltime}
		\end{align}
		Since $\Sigma_{-}(\omega)\subseteq\Sigma_{-,\ep/2}(\omega_0)$,  using \eqref{boundaryestimate}, we get\\
		\begin{align}
		\int_{\Sigma_{-}(\omega)}{\ld e^{-2\lambda(t+x\cdot\omega)}}|\partial_\nu u|^2 dS\le\int_{\Sigma_{-,\epsilon/2}(\omega_0)}{\ld e^{-2\lambda(t+x\cdot\omega)}}|\partial_\nu u|^2 dS\le C e^{\beta\lambda}\|\Lambda_1-\Lambda_2\|_{*}^2\|\phi\|^2_{H^3(\mathbb{R}^n)}.\label{boundarygreen2}
		\end{align}
		Hence $\mc{K}$ can be bounded by  
		$C (\lambda^2+ e^{\beta\lambda}\|\Lambda_1-\Lambda_2\|_{*}^2 )\|\phi\|_{H^3(\mathbb{R}^n)}^2$. Using it with boundary Carleman estimate we bound each term present in R.H.S of \eqref{intidentity}. 
		We use Holder's inequality and trace theorem to get from \eqref{CGO2}
		\begin{align}
		&\left|\int_{\Omega}\partial_tu(T,x)\overline{v(T,x)}\ dx\right|\nn\\
		&=\left|\int_{\Omega}e^{-\lambda(T+x\cdot\omega)}\partial_tu(T,x)(\phi(x+T\omega){ e}^{-\int_{0}^{T}(1,-\omega)\cdot\mathcal{A}_1(s,x+(T-s)\omega) ds}+R_\lambda^1(T,x))dx\right|,\nn\\
		&\le\f{C\|\phi\|_{H^3(\mathbb{R}^n)}}{\rt\ld}\sqrt{{\int_{\Omega}\ld e^{-2\lambda(T+x\cdot\omega)} 
				|\partial_tu(T,x)|^2}\ dx},\nn\\
		&\le\f{C\|\phi\|_{H^3(\mathbb{R}^n)}}{\rt\ld}\sqrt{\mc{K}}
		\le C\|\phi\|^2_{H^3(\mathbb{R}^n)}\left({\rt\ld} +e^{\beta\lambda}\|\Lambda_1-\Lambda_2\|_{*} \right).\label{boundaryterm}
		\end{align}
		Proceeding similarly we get from \eqref{CGO2} and  \eqref{derivativeCGO2} 
		\begin{align}
		\left|\int_{\Omega}u(T,x)\overline{\partial_tv(T,x)}\ dx\right|&\le C\ld\|\phi\|_{H^3(\mathbb{R}^n)}\sqrt{{\int_{\Omega}e^{-2\lambda(T+x\cdot\omega)}|u(T,x)|^2}\ dx},\nn\\
		&\le Ce^{\beta\lambda}\|\Lambda_1-\Lambda_2\|_{*}\|\phi\|_{H^3(\mathbb{R}^n)}^2 \label{finaltimeestimate}. 
		\end{align}
		\begin{align}
		\m{and, }\left|\int_\Sigma\partial_\nu u(t,x)\overline{v(t,x)}\ dS\right|&=\frac{1}{\sqrt\lambda}\ \left|\int_\Sigma\sqrt{\lambda}\partial_\nu u(t,x)\overline{v(t,x)}\ dS\right|,\nn\\
		&\le \frac{C\|\phi\|_{H^3(\mathbb{R}^n)}}{\sqrt\lambda}\sqrt{\int_{\Sigma}{\lambda e^{-2\lambda(t+x\cdot\omega)}}|\dd_\nu u(t,x)|^2\ dS}.\label{boundarygreen1}
		\end{align}
		We made boundary measurements on more than half of the boundary which is $\Sigma_{-,\epsilon/2}(\omega_0)$. For the part $\Sigma_{+,\epsilon/2}(\omega_0)$ we will be using the boundary Carleman estimate.     
		\begin{align*}
		&\int_{\Sigma}{\lambda e^{-2\lambda(t+x\cdot\omega)}}|\dd_\nu u(t,x)|^2\ dS\\
		&=\int_{\Sigma_{+,\epsilon/2}(\omega_0)}{\lambda e^{-2\lambda(t+x\cdot\omega)}}|\dd_\nu u(t,x)|^2\ dS+\int_{\Sigma_{-,\epsilon/2}(\omega_0)}{\lambda e^{-2\lambda(t+x\cdot\omega)}}|\dd_\nu u(t,x)|^2\ dS.
		\end{align*}
		Since $\Sigma_{+,\ep/2}(\omega_0)\subseteq\Sigma_{+}(\omega)$ we obtain the following
		\begin{align}\int\limits_{\Sigma_{+,\epsilon/2}(\omega_0)}{\ld e^{-2\lambda(t+x\cdot\omega)}}|\partial_\nu u(t,x)|^2\ dS&\le \int\limits_{\Sigma_{+,\epsilon/2}(\omega_0)}\frac{\ld}{\epsilon/2}{ e^{-2\lambda(t+x\cdot\omega)}}|\nu(x)\cdot\omega||\partial_\nu u(t,x)|^2 dS,\nn\\
		&\le\frac{1}{\epsilon/2}\int_{\Sigma_{+}(\omega)}{\lambda e^{-2\lambda(t+x\cdot\omega)}}|\nu(x)\cdot\omega||\partial_\nu u(t,x)|^2 dS,\nn\\
		&\le \f{2}{\epsilon}\mc{K},\qd\qd  \ (\m{using boundary Carleman estimates})\nn\\
		&\le \f{2C}{\epsilon}(\lambda^2+ e^{\beta\lambda}\|\Lambda_1-\Lambda_2\|_{*}^2 )\|\phi\|_{H^3(\mathbb{R}^n)}^2.\label{boundarygreen3}
		\end{align}
		We assemble the estimates \eqref{boundarygreen2},\eqref{finaltimeestimate},\eqref{boundaryterm} and \eqref{boundarygreen3} to obtain from \eqref{intidentity} the following   
		\begin{align}
		\left|\int_{Q}(2\mathcal{A}\cdot(\partial_t,-\nabla_x)u_2+\tilde{q}u_2)(t,x)\overline{v(t,x)}dxdt\right|
		\le C\|\phi\|^2_{H^3(\mathbb{R}^n)}\left({\rt\ld} +e^{\beta\lambda}\|\Lambda_1-\Lambda_2\|_{*} \right).\label{estimate}
		\end{align} 
		We use expressions of geometrical optics solutions which are \eqref{CGO1} and \eqref{CGO2} to get 
		$$\left|\int_{Q}\tilde{q}(t,x)u_2(t,x)v(t,x)\ dxdt\right|\le C\|\phi\|^2_{H^3(\R^n)}.$$
		Thus, dividing \eqref{estimate} by $\ld$ we obtain
		\begin{align*}
		\left\vert\int_{Q}(1,-\omega)\cdot\mathcal{A}(t,x) e^{-\int_{0}^{t}{(1,-\omega)\cdot\mathcal{A}(s,x+(t-s)\og)ds}} |\phi(x+\og t)|^2dxdt\right|\le C\left(\frac{1}{\sqrt{\lambda}}+ e^{\beta\lambda}\|\Lambda_1-\Lambda_2\|_{*}\right)\|\phi\|_{H^3(\R^n)}^2.
		\end{align*}
		Now we make a change of variables which is $y=x+t\og$ and use $\mc{A}\in C_c^{\iy}(Q)^{n+1}$ to obtain 
		\begin{align}
		\left\vert\int_{\mb{R}^n}(1,-\omega)\cdot\mathcal{A}(t,y-t\omega) e^{-\int_{0}^{t}{(1,-\omega)\cdot\mathcal{A}(s,y-s\omega)ds}} |\phi(y)|^2dydt\right|\le C\left(\frac{1}{\sqrt{\lambda}}+ e^{\beta\lambda}\|\Lambda_1-\Lambda_2\|_{*}\right)\|\phi\|_{H^3(\mathbb{R}^n)}^2
		\label{integralidentity}.
		\end{align}
		This completes proof of the lemma.	
	\end{proof}	
	
	To get estimates for light-ray transform from \eqref{integralidentity} we adapt the arguments presented in \cite{BJY2} or \cite{aicha1}. Basically the proof relies on limit passing argument for an approximate identity. \\
	
	\begin{lemma}{3.}
		For all $x\in\mb{R}^n$ and $\og\in\mb{S}^{n-1}$ satisfying $|\og-\og_0|<\f{\epsilon}{2}$, there exist $\delta>0,\ld_0>0,C>0$ such that following holds whenever $\ld\ge\ld_0$
		\begin{align}  
		\left|\int_{\mathbb{R}}{(1,-\omega)\cdot\mathcal{A}(s,x-s\omega)\ ds}\right|\le C\left(\frac{1}{\lambda^\delta}+\rm e^{\beta\lambda}\|\Lambda_1-\Lambda_2\|_{*}\right).\nn
		\end{align}
	\end{lemma}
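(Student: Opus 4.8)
The plan is to start from the integral identity \eqref{integralidentity} of Lemma 2 and to extract the light-ray transform by a concentration argument, following the approximate-identity scheme of \cite{BJY2,aicha1}. Writing $\mc{A}=\mc{A}_2-\mc{A}_1$ and $g_\og(y)=\int_{\mb{R}}(1,-\og)\cdot\mc{A}(s,y-s\og)\,ds$, the key observation I would use first is that the $t$-integrand in \eqref{integralidentity} is an exact $t$-derivative: by the fundamental theorem of calculus,
\[\f{d}{dt}\Bigl(-e^{-\int_0^t(1,-\og)\cdot\mc{A}(s,y-s\og)\,ds}\Bigr)=(1,-\og)\cdot\mc{A}(t,y-t\og)\,e^{-\int_0^t(1,-\og)\cdot\mc{A}(s,y-s\og)\,ds}.\]
Since $\mc{A}\in C_c^\iy(Q)^{n+1}$, the map $s\mapsto\mc{A}(s,y-s\og)$ vanishes for $s\notin(0,T)$, so integrating in $t$ over $(0,T)$ (equivalently over $\mb{R}$) collapses the $t$-integral to $1-e^{-g_\og(y)}$, and Fubini turns \eqref{integralidentity} into
\[\Bigl|\int_{\mb{R}^n}\bigl(1-e^{-g_\og(y)}\bigr)|\phi(y)|^2\,dy\Bigr|\le C\Bigl(\f{1}{\rt{\ld}}+e^{\beta\ld}\|\Lambda_1-\Lambda_2\|_*\Bigr)\|\phi\|_{H^3(\mb{R}^n)}^2,\qquad \ld\ge\ld_0,\]
valid for every $\phi\in C_c^\iy(\mb{R}^n)$, with $C$ independent of $\phi$.

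Next I would localise at a fixed point $x\in\mb{R}^n$. Pick $\chi\in C_c^\iy(\mb{R}^n)$ with $\int_{\mb{R}^n}|\chi|^2=1$ and, for $0<h\le1$, set $\phi_h(y)=h^{-n/2}\chi\bigl((y-x)/h\bigr)$, so that $\int_{\mb{R}^n}|\phi_h|^2=1$, $\m{supp}\,\phi_h\subset B(x,Ch)$, and $\|\phi_h\|_{H^3(\mb{R}^n)}^2\le Ch^{-6}$. From $\|\mc{A}\|_{H^{\f{n+1}{2}+\al}(Q)}\le C_0$ and the Sobolev embedding, $\mc{A}$, hence $g_\og$, is H\"older continuous with an exponent $\ga=\ga(\al)>0$ and with norm bounded by a constant depending only on $C_0,\al,Q$, uniformly in $\og$; in particular $\sup_\og\|g_\og\|_{L^\iy(\mb{R}^n)}\le M$ for such an $M$. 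Inserting $\phi_h$ into the display above and replacing $g_\og(y)$ by $g_\og(x)$ — an error $\le Ch^\ga$, since $|\phi_h|^2$ is an approximate identity supported in $B(x,Ch)$ and $|e^{-a}-e^{-b}|\le e^{M}|a-b|$ on $[-M,M]$ — would give
\[\bigl|1-e^{-g_\og(x)}\bigr|\le C\Bigl(h^\ga+\bigl(\ld^{-1/2}+e^{\beta\ld}\|\Lambda_1-\Lambda_2\|_*\bigr)h^{-6}\Bigr).\]

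Finally I would optimise in $h$: taking $h=\ld^{-1/(2(\ga+6))}$ balances $h^\ga$ against $\ld^{-1/2}h^{-6}$, and the residual factor $h^{-6}=\ld^{3/(\ga+6)}$ multiplying $e^{\beta\ld}\|\Lambda_1-\Lambda_2\|_*$ is absorbed into the exponential at the cost of enlarging $\beta$, giving $|1-e^{-g_\og(x)}|\le C(\ld^{-\dl}+e^{\beta\ld}\|\Lambda_1-\Lambda_2\|_*)$ with $\dl=\ga/(2(\ga+6))>0$. Since $|g_\og(x)|\le M$ and $|1-e^{-z}|\ge e^{-M}|z|$ for $z\in[-M,M]$, this turns into $|g_\og(x)|\le C(\ld^{-\dl}+e^{\beta\ld}\|\Lambda_1-\Lambda_2\|_*)$, which is the claimed bound since $g_\og(x)=\int_{\mb{R}}(1,-\og)\cdot\mc{A}(s,x-s\og)\,ds$. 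The one genuinely new ingredient is the "total derivative" identity that linearises the nonlinear exponential weight; after that the argument is the standard light-ray/approximate-identity reduction. The main obstacle I anticipate is purely technical: making the concentration estimate quantitative (the $Ch^\ga$ error and the uniform-in-$\og$ H\"older regularity of $g_\og$) and carrying out the $h$–$\ld$ trade-off cleanly so that all constants depend only on $C_0,\al,Q$.
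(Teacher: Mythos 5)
Your proposal is correct and follows essentially the same route as the paper: rewrite the $t$-integrand as an exact derivative to collapse the exponential weight to $1-e^{-g_\og(y)}$, concentrate with the rescaled family $\phi_h$, compare $g_\og(y)$ to $g_\og(x)$ via the regularity of $\mc{A}$, invert $z\mapsto 1-e^{-z}$ on a bounded interval, and optimise $h$ against $\ld$. Your version is in fact marginally more careful than the paper's (you keep the correct power $\|\phi_h\|_{H^3}^2\le Ch^{-6}$ where the paper writes $h^{-3}$, and you use uniform H\"older continuity from the Sobolev a-priori bound rather than the paper's mean-value-theorem step), but these refinements only change the value of $\dl$, not the argument.
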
	
	\begin{proof}
		
		We can write
		\begin{align}   
		&\int_{0}^{T}\int_{\mathbb{R}^n}(1,-\omega)\cdot\mathcal{A}(t,y-t\omega) e^{-\int_{0}^{t}(1,-\omega)\cdot\mathcal{A}(s,y-s\omega)ds} |\phi(y)|^2\ dydt\nn\\
		&=-\int_{\mathbb{R}^n}\int_{0}^{T}|\phi(y)|^2\partial_t{\left( e^{-\int_{0}^{t}{(1,-\omega)\cdot\mathcal{A}(s,y-s\omega)ds}}\right)}\ dtdy,\nn\\
		\label{f5}&=-\int_{\mathbb{R}^n}|\phi(y)|^2\left( e^{-\int_{0}^{T}{(1,-\omega)\cdot\mathcal{A}(s,y-s\omega)ds}}-1\right)dy.
		\end{align}
		Fix $x\in\mb{R}^n$ and choose $\phi\in C_c^{\iy}(B(0,1))$ with $\|\phi\|_{L^2(\mathbb{R}^n)}=1$ where $B(0,1)$ is the open unit ball in $\R^n$.
		Let us define 
		\[ \phi_h(y)=h^{-\frac{n}{2}}\phi(\frac{y-x}{h})\m{ in }\mathbb{R}^n\m{   for   }h>0.\] 
		Then $\phi_h\in C_c^{\iy}(\R^n)$ and there exists $C>0$ which depends on $\phi$ such that following holds 
		\[supp(\phi_h)\subseteq B(x,h),\|\phi_h\|_{L^2(\mathbb{R}^n)}=1 \m{ and } \|\phi_h\|_{H^3(\mathbb{R}^n)}\le Ch^{-3}.\] 
		Now using \eqref{f5} we get from \eqref{integralidentity}
		\begin{equation}
		\left|\int_{\R^n}|\phi_h(y)|^2(e^{-\int_{0}^{T}(1,-\og)\cdot\mc{A}(s,y-s\og)ds}-1)dy\right|\le Ch^{-3}\left(\frac{1}{\sqrt{\lambda}}+e^{\beta\lambda}\|\Lambda_1-\Lambda_2\|_{*}\right).\label{approximation1}
		\end{equation}
		\vspace*{1mm}  
		As $\mathcal{A}\in C_c^{\iy}(Q)^{n+1}$, using mean value theorem twice we get $C>0$ which depends on $Q$ and a-priori bounds of $\mc{A}_i$ such that the following holds
		\begin{align}
		&\left|e^{-\int_{0}^{T}{(1,-\omega)\cdot\mathcal{A}(s,x-s\omega)ds}}- e^{-\int_{0}^{T}{(1,-\omega)\cdot\mathcal{A}(s,y-s\omega)ds}}\right|\nn\\
		&\le C\left|\int_{0}^{T}{(1,-\omega)\cdot\mathcal{A}(s,x-s\omega)ds}-\int_{0}^{T}{(1,-\omega)\cdot\mathcal{A}(s,y-s\omega)ds}\right|\le C|x-y|.\label{approximation2}
		\end{align}
		Consider the following positive continuous function on $\mb{R}$  
		\begin{align}
		f(x)=\begin {cases}
		\f{e^x-1}{x}\qd &\m{for }x\neq 0,\\
		1\qd\qd&\m{for }x=0.
		\end{cases}\label{func}
		\end{align}
		
		\noindent For $M>0$ we use continuity of $f$ on the compact interval $[-M,M]$ to get $C>0$ which depends on $M$ such that 
		\[|x|\le C|e^x-1| \qd\qd\qd\fa x\in[-M,M]. \]
		Since $\mc{A}\in C_c^{\iy}(Q)^{n+1}$ we get $C>0$ depending on $Q$ and a-priori bounds of $\mc{A}_i$
		such that the following estimate holds
		\[\left|\int_{0}^{T}(1,-\og)\cdot\mc{A}(s,x-s\og)ds\right|\le C\left| e^{-\int_{0}^{T}(1,-\og)\cdot\mc{A}(s,x-s\og)ds}-1\right|.\]
		Now, \begin{align}
		&\left|e^{-\int_{0}^{T}(1,-\og)\cdot\mc{A}(s,x-s\og)ds}-1\right|=\left|\int_{\R^n}|\phi_h(y)|^2(e^{-\int_{0}^{T}-(1,-\og)\cdot\mc{A}(s,x-s\og)ds}-1)\ dy\right|,\nn\\
		&\le\  \left|\int_{\R^n}|\phi_h(y)|^2(e^{-\int_{0}^{T}(1,-\og)\cdot\mc{A}(s,x-s\og)ds}-e^{-\int_{0}^{T}(1,-\og)\cdot\mc{A}(s,y-s\og)ds})\ dy\right|\nn\\
		&\qd+\left|\int_{\R^n}|\phi_h(y)|^2(e^{-\int_{0}^{T}(1,-\og)\cdot\mc{A}(s,y-s\og)ds}-1)\ dy\right|,\m{ (using triangle inequality)}\nn\\
		&\le  C\left(h+h^{-3}\left(\frac{1}{\sqrt{\lambda}}+ e^{\beta\lambda}\|\Lambda_1-\Lambda_2\|_{*}\right)\right).\ \ (\m{using }\eqref{approximation2}\m{ and }\eqref{approximation1}) \label{lineestimate1}
		\end{align}    
		We choose small enough $h>0$ such that $h$ and $\frac{h^{-3}}{\sqrt{\lambda}}$ are comparable. It can be done by taking $h=\ld^{-\f{1}{8}}$. Then for $\lambda\ge\lambda_0$ and  $|\og-\og_0|<\f{\epsilon}{2}$ we get from $\eqref{lineestimate1}$ there exists $\delta>0$ such that
		\begin{align}
		\left|\int_{\mathbb{R}}{(1,-\omega)\cdot\mathcal{A}(s,x-s\omega)ds}\right|&=\left|\int_{0}^{T}{(1,-\omega)\cdot\mathcal{A}(s,x-s\omega)ds}\right|,\nn\\
		\label{line integral estimate}&\le C\left(\frac{1}{\lambda^\delta}+ e^{\beta\lambda}\|\Lambda_1-\Lambda_2\|_{*}\right).
		\end{align} 
		This completes the proof.
	\end{proof}
	
	Now we use ideas from \cite{BR} to have Fourier estimates of some specific components of vector potentials along right rays. 
	
	\begin{corollary}{2.}
		There exist an open cone $\mc{C}$ in $\mb{R}^{n+1}$ such that for $(\tau,\xi)\in\mc{C}$ the following holds for all $\og(\tau,\xi)\in\mb{S}^{n-1}$ satisfying $|\og-\og_0|<\f{\epsilon}{2}$ and $\og(\tau,\xi)\cdot\xi=\tau$, \[|(1,-\og(\tau,\xi))\cdot\widehat{\mc{A}}(\tau,\xi)|\le C\left(\frac{1}{\lambda^\delta}+ e^{\beta\lambda}\|\Lambda_1-\Lambda_2\|_{*}\right).
		\]	
	\end{corollary}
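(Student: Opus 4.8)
The plan is to convert the pointwise bound on the light--ray transform from Lemma 3 into a bound on a slice of the space--time Fourier transform of $\mc{A}=\mc{A}_2-\mc{A}_1$, using the Fourier slice identity for the light--ray transform, and then to carve out the cone $\mc{C}$ as the set of those frequencies whose slice is actually reached by an admissible direction $\og$.

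First I set up the transform. Since each component of $\mc{A}$ lies in $C_c^{\iy}(Q)$, I extend $\mc{A}$ by zero to $\mb{R}^{n+1}$ and put, for $\og\in\mb{S}^{n-1}$ with $|\og-\og_0|<\ep/2$,
\[ L_{\og}(x):=\int_{\mb{R}}(1,-\og)\cdot\mc{A}(s,x-s\og)\,ds,\qquad x\in\mb{R}^n. \]
Because $\mc{A}$ is supported in $(0,T)\times\Omega$, the function $L_{\og}$ is smooth and supported in $\bigcup_{0<s<T}(\Omega+s\og)$, a set whose Lebesgue measure is bounded by a constant depending only on $\Omega$ and $T$, uniformly over the cap $|\og-\og_0|<\ep/2$. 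Moreover Lemma 3 yields, with constants uniform in $x$ and in such $\og$,
\[ \|L_{\og}\|_{L^{\iy}(\mb{R}^n)}\le C\big(\ld^{-\dl}+e^{\beta\ld}\|\Lambda_1-\Lambda_2\|_{*}\big),\qquad \ld\ge\ld_0. \]

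Next I establish the slice identity. Taking the $n$--dimensional Fourier transform of $L_{\og}$, substituting $z=x-s\og$ (so $dx=dz$ for each fixed $s$), and using Fubini (legitimate by compact support),
\begin{align*}
\widehat{L_{\og}}(\xi)&=\int_{\mb{R}^n}\!\int_{\mb{R}}e^{-ix\cdot\xi}\,(1,-\og)\cdot\mc{A}(s,x-s\og)\,ds\,dx\\
&=\int_{\mb{R}}e^{-is(\og\cdot\xi)}\Big((1,-\og)\cdot\int_{\mb{R}^n}e^{-iz\cdot\xi}\mc{A}(s,z)\,dz\Big)ds=(1,-\og)\cdot\widehat{\mc{A}}(\og\cdot\xi,\xi),
\end{align*}
where $\widehat{\mc{A}}$ is the space--time Fourier transform of the vector--valued $\mc{A}$ and we used that $(1,-\og)$ is a constant vector. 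Combining this with the two bounds of the previous paragraph gives, for every $\xi\in\mb{R}^n$ and every admissible $\og$,
\[ \big|(1,-\og)\cdot\widehat{\mc{A}}(\og\cdot\xi,\xi)\big|\le C\big(\ld^{-\dl}+e^{\beta\ld}\|\Lambda_1-\Lambda_2\|_{*}\big). \]

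Finally I identify the cone. Decompose $\xi=(\xi\cdot\og_0)\og_0+\xi^{\perp}$ with $\xi^{\perp}\perp\og_0$, and for $\xi^{\perp}\neq0$ consider the great--circle family $\og(\theta)=\cos\theta\,\og_0+\sin\theta\,\xi^{\perp}/|\xi^{\perp}|\in\mb{S}^{n-1}$, for which $|\og(\theta)-\og_0|=2|\sin(\theta/2)|\le|\theta|$ and $\og(\theta)\cdot\xi=\cos\theta\,(\og_0\cdot\xi)+\sin\theta\,|\xi^{\perp}|$. Letting $\theta$ range over $(-\ep/4,\ep/4)$ keeps $|\og(\theta)-\og_0|<\ep/2$ and makes $\og(\theta)\cdot\xi$ sweep an interval around $\og_0\cdot\xi$ of half--length $\gtrsim\ep\,|\xi^{\perp}|$ once $\ep$ is small (the linear term in $\theta$ dominates the quadratic one). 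Hence, fixing small constants $c_1,c_2>0$ depending only on $\ep$, the set
\[ \mc{C}=\big\{(\tau,\xi)\in\mb{R}\times\mb{R}^n:\ |\xi^{\perp}|>c_1|\xi|,\ |\tau-\og_0\cdot\xi|<c_2|\xi|\big\} \]
is a nonempty open cone in $\mb{R}^{n+1}$ on which, for each $(\tau,\xi)$, one can pick $\og=\og(\tau,\xi)\in\mb{S}^{n-1}$ with $|\og-\og_0|<\ep/2$ and $\og\cdot\xi=\tau$; substituting this $\og$ into the last display gives the claimed estimate. The only step carrying genuine content is this last one — verifying that the constraint $\og\cdot\xi=\tau$ is solvable within distance $\ep/2$ of $\og_0$ on a full--dimensional open cone of $(\tau,\xi)$, which is exactly what forces the two defining conditions of $\mc{C}$ (a transverse component of $\xi$ bounded below and $\tau$ not too far from $\og_0\cdot\xi$). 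The slice computation and the uniform support and $L^{\iy}$ bounds are routine, provided the constants are kept uniform over the cap $|\og-\og_0|<\ep/2$.
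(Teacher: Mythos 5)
Your proposal is correct, and the core of it coincides with the paper's argument: both proofs are the Fourier slice identity $\widehat{L_\og}(\xi)=(1,-\og)\cdot\widehat{\mc{A}}(\og\cdot\xi,\xi)$ (obtained in the paper by the change of variables $y\to z-s\og$ inside the space--time Fourier integral), followed by the $L^1$ bound coming from Lemma 3 together with the compact support of $\mc{A}$. The only real divergence is in how the cone is carved out. The paper sets $K_\ep=\cup_{|\og-\og_0|<\ep/2}\og^{\perp}$ and $E_\ep=\{(\tau,\xi)\in\R\times K_{\ep/2};\ |\tau|<\tfrac{\ep}{8}|\xi|\}$, and for $(\tau_0,\xi_0)\in E_\ep$ writes down the admissible direction explicitly as $\og=\tfrac{\tau_0}{|\xi_0|^2}\xi_0+\sqrt{1-\tau_0^2/|\xi_0|^2}\,\og_1$ with $\og_1\in\xi_0^{\perp}$ close to $\og_0$; you instead run an intermediate--value argument along the great circle through $\og_0$ in the direction of $\xi^{\perp}$ and obtain the cone $\{|\xi^{\perp}|>c_1|\xi|,\ |\tau-\og_0\cdot\xi|<c_2|\xi|\}$. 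Both constructions are valid and both produce a nonempty open cone, so your proof of the corollary as stated is complete; note only that your cone is centred on the graph $\tau=\og_0\cdot\xi$ rather than on $\{\tau=0\}$ over near--$\og_0^{\perp}$ frequencies, so if you wanted to plug your $\mc{C}$ into the paper's subsequent Lemmas 5--7 (where $\mc{B}_{\tau,\xi}$ is analysed under the hypothesis $|\tau|<\tfrac{\ep}{8}|\xi|$), the constants and the non--emptiness argument there would have to be re--checked against your defining inequalities.
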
  
	
	\begin{proof}
		Consider $x\in\R^n$ and $\og\in\mb{S}^{n-1}$ satisfying $|\og-\og_0|<\ep/2$. Then we have
		\begin{align}
		&|(1,-\og)\cdot\widehat{\mc{A}}(\og\cdot\xi,\xi)|=\left|\int_{\R^{1+n}}e^{-iy\cdot \xi}e^{-i(\og\cdot\xi)s}(1,-\og)\cdot\mc{A}(s,y)\ dsdy\right|,\nn\\
		&\overset{(y\to z-s\og)}{=}\left|\int_{\R^n}e^{-iz\cdot\xi}\left(\int_{\R}(1,-\og)\cdot {\mc{A}}(s,z-\og s)ds\right)\ dz\right|,\nn\\
		&\qd\le\int_{\R^n}\left|\int_{\R}(1,-\og)\cdot\mc{A}(s,z-s\og)ds\right|\ dz,\nn\\
		\label{C1}&\qd\le C\left(\frac{1}{\lambda^\delta}+e^{\beta\lambda}\|\Lambda_1-\Lambda_2\|_{*}\right).\qd (\m{ using }\eqref{line integral estimate}\m{ and }\mc{A}\in C_c^{\iy}(Q)^{n+1}) 
		\end{align}
		Now as in \cite{BR} we characterize points of $\R^{n+1}$ for which estimate $\eqref{C1}$ holds.
		So we define \[K_{\epsilon}=\cup_{|\og-\og_0|<{\epsilon/2}}\og^\perp\m{ and } E_{\epsilon}=\{(\tau,\xi)\in\R\times K_{\epsilon/2};|\tau|<\f{\epsilon}{8}|\xi|\}.\] 
		Since interior of $K_\epsilon$ being nonempty, $E_\epsilon$ contains an open cone say $\mc{C}$.
		Now we will show that if $(\tau_0,\xi_0)\in E_{\epsilon}$, then there exists $\og\in\mb{S}^{n-1}$ such that $|\og-\og_0|<\f{\epsilon}{2}$ and $t_0=\og\cdot x_0$.\\\\
		Assume $(\tau_0,\xi_0)\in E_\epsilon$ then $\xi_0\cdot\og_1=0$ for some $\og_1\in\mb{S}^{n-1}$ with $|\og_1-\og_0|<\f{\epsilon}{4}$. Now we take,  $\og=\f{\tau_0}{|\xi_0|^2}\xi_0+\rt{1-\f{\tau_0^2}{|\xi_0|^2}}\og_1.$ We see then $\tau_0=\xi_0\cdot \og.$\\ 
		
		\noindent By our choice of $\og_0\in\mb{S}^{n-1}$ we observe
		\begin{align}
		|\og-\og_0|&\le\left|\f{\tau_0}{|\xi_0|^2}\right||\xi_0|+\rt{1-\f{\tau_0^2}{|\xi_0|^2}}|\og_1-\og_0|+\left|\left(\rt{1-\f{\tau_0^2}{|\xi_0|^2}}-1\right)\og_0\right|\nn\\
		&\le\f{\epsilon}{8}+\f{\epsilon}{4}+\f{\epsilon}{8}=\f{\epsilon}{2}.\nn
		\end{align}
		Thus for $(\tau,\xi)\in E_\epsilon$ we obtain, $ |(1,-\og)\cdot\widehat{\mc{A}}(\tau,\xi)|\le C(\frac{1}{\lambda^\delta}+ e^{\beta\lambda}\|\Lambda_1-\Lambda_2\|_{*})$  where  $\og\in\mb{S}^{n-1}$ with $|\og-\og_0|<\epsilon/2$ so that $\tau=\og\cdot\xi$ .
	\end{proof}
	
	Now we will obtain some uniform norm estimate of vector potentials over a cone so that we can take advantage of Vessella's analytic continuation argument for estimating Fourier transform of vector potentials over large balls. We follow the arguments of \cite{salazar 2} in this regard, but we first consider the full data case to expound Lemma 2.5 of \cite{salazar 2}. Then we present the partial data case providing with an explanation why the method does not work for $n=2$.

	\begin{lemma}{4.} 
		For fixed $(\tau,\xi)\in\mb{S}^{n}$ satisfying $|\tau|< \f{1}{2}|\xi|$ consider the following set of equations
		\begin{align}
		(1,-\omega(\tau,\xi))\cdot\widehat{\mc{A}}(\tau,\xi)&= G(\xi,\omega(\tau,\xi)),\label{ray transform}\\
		(\tau,\xi)\cdot\widehat{\mc{A}}(\tau,\xi)&=0,\qd(\m{This is because }\mc{A} \m{ is divergence-free})\label{divergence free}\\
		\m{ where } \omega(\tau,\xi)&\in\mb{S}^{n-1} \m{ satisfies } \omega(\tau,\xi)\cdot\xi=\tau.\label{unit vector}
		\end{align} 
		
		\noindent Then there exist $C>0$ independent of $\{(\tau,\xi)\in\mb{S}^{n};|\tau|<\f{1}{2}|\xi|\}$ such that for some choice of $\omega^k(\tau,\xi)$'s satisfying \eqref{unit vector} we have 
		
		\[ |\widehat{A}_j(\tau,\xi)|\le C\max\limits_{1\le k\le n}|G(\xi,\omega^k(\tau,\xi))|, \qd \forall j\in\{0,1,...,n\}.\]
	\end{lemma}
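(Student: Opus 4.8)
The plan is to read \eqref{ray transform}--\eqref{divergence free} as a square linear system for $\widehat{\mc A}(\tau,\xi)=(\widehat A_0,\widehat A_1,\dots,\widehat A_n)$ and to solve it with a bound independent of $(\tau,\xi)$. Write $\widehat{\mc A}=(\widehat A_0,\widehat A')$ with $\widehat A'=(\widehat A_1,\dots,\widehat A_n)$, so that \eqref{ray transform} reads $\widehat A_0-\og\cdot\widehat A'=G(\xi,\og)$ and \eqref{divergence free} reads $\tau\widehat A_0+\xi\cdot\widehat A'=0$. The first observation is that, by \eqref{unit vector}, \emph{every} admissible light-ray vector satisfies $(1,-\og)\cdot(\tau,\xi)=\tau-\og\cdot\xi=0$; hence all vectors $(1,-\og)$ with $\og\cdot\xi=\tau$ lie in the single hyperplane $(\tau,\xi)^{\perp}\subset\R^{n+1}$. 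Since $(\tau,\xi)\in\mb S^{n}$ we have $\tau^2+|\xi|^2=1$, so $|\xi|\le1$, while $|\tau|<\tfrac12|\xi|$ forces $|\xi|>2/\sqrt5$ and makes $\rho:=\sqrt{1-\tau^2/|\xi|^2}$ lie in $(\sqrt3/2,\,1]$; thus $\rho$ is bounded below uniformly. My aim is to choose $\og^1,\dots,\og^n\in\mb S^{n-1}$ with $\og^k\cdot\xi=\tau$ so that $(1,-\og^1),\dots,(1,-\og^n)$ is a basis of $(\tau,\xi)^{\perp}$; together with $(\tau,\xi)$ they then form a basis of $\R^{n+1}$, and if $M$ is the $(n+1)\times(n+1)$ matrix with these rows, \eqref{ray transform}--\eqref{divergence free} become $M\,\widehat{\mc A}(\tau,\xi)=(G(\xi,\og^1),\dots,G(\xi,\og^n),0)^{T}$, so that $|\widehat A_j(\tau,\xi)|\le|\widehat{\mc A}(\tau,\xi)|\le\sqrt n\,\|M^{-1}\|\,\max_{1\le k\le n}|G(\xi,\og^k)|$. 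Everything reduces to arranging $\|M^{-1}\|\le C(n)$.

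For this I would make the choice explicit: put $e=\xi/|\xi|$, complete to an orthonormal basis $\{e,f_1,\dots,f_{n-1}\}$ of $\R^n$, set $t=\tau/|\xi|$ and $\rho=\sqrt{1-t^2}$, and take
\[
\og^k=t\,e+\rho\,f_k\quad(1\le k\le n-1),\qquad \og^n=t\,e-\rho\,f_1,
\]
each lying in $\mb S^{n-1}$ with $\og^k\cdot\xi=t|\xi|=\tau$, i.e.\ satisfying \eqref{unit vector}. Expanding $\widehat A'=a\,e+\sum_{j=1}^{n-1}b_j f_j$ rewrites the system in the unknowns $(\widehat A_0,a,b_1,\dots,b_{n-1})$ with a matrix whose rows are $(1,-t,-\rho\,\mathbf e_k)$ for $1\le k\le n-1$, then $(1,-t,\rho\,\mathbf e_1)$, then $(\tau,|\xi|,0)$, where $\mathbf e_k$ denotes the $k$-th coordinate vector of $\R^{n-1}$. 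Subtracting the $n$-th row from rows $1,\dots,n-1$ kills their $(\widehat A_0,a)$-entries and leaves a lower-triangular $(n-1)\times(n-1)$ block in the $b$-variables with diagonal $(-2\rho,-\rho,\dots,-\rho)$, while the $(\widehat A_0,a)$-variables survive only in the $2\times2$ block with rows $(1,-t)$ and $(\tau,|\xi|)$, of determinant $|\xi|+t\tau=(|\xi|^2+\tau^2)/|\xi|=1/|\xi|$. Hence $|\det M|=2\rho^{\,n-1}/|\xi|$. Since every entry of $M$ is at most $1$ in modulus and $|\det M|\ge 2(\sqrt3/2)^{\,n-1}$, the adjugate formula $M^{-1}=(\det M)^{-1}\,\mathrm{adj}\,M$ gives $\|M^{-1}\|\le (n+1)!\,/\bigl(2(\sqrt3/2)^{\,n-1}\bigr)=:C(n)$; as $M$ is real, the same bound governs the $\C$-valued solution $\widehat{\mc A}(\tau,\xi)$, and this yields the lemma with $C$ depending on $n$ alone.

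The only genuine difficulty is exactly this uniformity --- that the conditioning of $M$ does not degenerate as $(\tau,\xi)$ varies --- and the hypothesis $|\tau|<\tfrac12|\xi|$ is what secures it: it keeps the radius $\rho$ of the sphere $\{\og\in\mb S^{n-1}:\og\cdot\xi=\tau\}$, over which the light-ray direction $\og$ may be chosen freely, bounded below, whereas as $|\tau|\uparrow|\xi|$ this sphere shrinks to a point and no well-conditioned choice of $\og^1,\dots,\og^n$ exists. I would finally remark that the linear algebra above already works for $n=2$; the real obstruction at $n=2$ appears only in the ensuing partial-data step, where $\og$ is additionally restricted to the cone $|\og-\og_0|<\epsilon/2$, and $\{\og\in\mb S^{n-1}:\og\cdot\xi=\tau\}$ intersected with that cone can fail to span $(\tau,\xi)^{\perp}$.
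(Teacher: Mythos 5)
Your argument is correct, and it follows the same overall skeleton as the paper: augment the $n$ light-ray equations \eqref{ray transform} with the divergence-free condition \eqref{divergence free} to get a square $(n+1)\times(n+1)$ system, prove a uniform lower bound on $|\det M_{(\tau,\xi)}|$, and conclude by the adjugate/cofactor formula using that all entries of $M$ are bounded by $1$. Where you genuinely differ is in the choice of directions and in how the determinant is bounded. The paper picks $n-1$ mutually orthogonal vectors $\tilde\og^1,\dots,\tilde\og^{n-1}$ on the sphere $r\mb{S}^{n-2}$ (with $r=\sqrt{1-\tau^2}>\sqrt3/2$) together with $\tilde\og^n=\f{1}{\sqrt2}(\tilde\og^{n-2}+\tilde\og^{n-1})$, then factors $\det M=V(\tau,\xi)\cdot P(\tau,\xi)$ into an $n$-dimensional volume times the length of the component of $(\tau,\xi)$ orthogonal to the span of the $(1,-\og^k)$, bounding $P$ below by $\sin(\pi/8)$ via the angle to the light cone and computing $V$ through a Gramian expansion. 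You instead take $\og^n$ to be the reflection $t e-\rho f_1$ of $\og^1$ and compute $|\det M|=2\rho^{n-1}/|\xi|$ directly by an orthogonal change of basis and row reduction. Your route is shorter and more transparent: it avoids the Gramian manipulation and the volume-times-projection decomposition (whose lower bound on $P(\tau,\xi)$ the paper asserts somewhat informally), and, as you note, it also survives at $n=2$ where the paper's recipe for $\tilde\og^n$ is not even well-defined; the paper's approach, on the other hand, is set up to generalize to less symmetric families of directions, which is what its Lemma 7 needs in the partial-data case where the explicit reflected configuration is no longer available. Two cosmetic points: the paper normalizes the last row by $\sqrt{\tau^2+|\xi|^2}$, which is $1$ on $\mb{S}^n$, so your unnormalized row is equivalent; and your constant $(n+1)!\,\sqrt n$ versus the paper's unspecified $C$ are both admissible since only independence of $(\tau,\xi)$ matters.
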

	
	\begin{proof} In order to make the presentation clear we assume $\xi=e_n$, otherwise one can consider appropriate rotation in $\mb{R}^{n}$ which does not change the end result as orthogonal transformations respect inner product. Thus the unit vectors $\omega$ in \eqref{ray transform} satisfy $\omega_n=\tau$. We observe then 
		
		\vspace*{-5mm}
		
		\begin{align*}
		\sum_{k=1}^{n-1}\omega_k^2=1-\omega_n^2=1-\tau^2 >\f{3}{4}.
		\end{align*}
		
		\noindent Hence all those $\omega$'s can be parametrized by $r\mb{S}^{n-2}$ where $\f{\sqrt{3}}{2}<r\le 1$. Let us choose $n-1$ orthogonal vectors from $r\mb{S}^{n-2}$ and denote them by $\tilde{\omega}^i$ for $i\in\{1,2...,n-1\}$. We define 
		
		\[\tilde{\omega}^n=\f{1}{\sqrt{2}}(\tilde{\omega}^{n-2}+\tilde{\omega}^{n-1}).\]
		
		\noindent Now we will be considering \eqref{ray transform} for ${\omega}_i$'s where ${\omega}^i=(\tilde{\omega}^i,\tau)$ for $i\in\{1,2...,n\}$. So the system of equations we are interested is the following
		
		\begin{align*}
		\widehat{A_0}(\tau,\xi)-\sum_{j=1}^{n}{\omega}^i_j\widehat{A}_j(\tau,\xi)&=G(\xi,\omega^i(\tau,\xi)), \qd i\in\{1,2,...n\}.\\
		\f{1}{\sqrt{\tau^2+|\xi|^2}}(\tau\widehat{A_0}(\tau,\xi)+\sum_{j=1}^{n}\xi_j\widehat{A}_j(\tau,\xi))&=0.
		\end{align*} 
		
		\noindent Unique solvability of the above system follows from the fact that we are assuming divergence free potential and orthogonal complement of $\{(1,-\omega(\tau,\xi));\omega(\tau,\xi)\in\mb{S}^{n-1}\mbox{ and }\tau+\xi\cdot\omega(\tau,\xi)=0\}$ is one dimensional (see Appendix of \cite{salazar}). For stable recovery of the potentials we want to obtain a positive lower bound on the absolute value of determinant of the matrix $M(\tau,\xi)$ defined by 
		
		\begin{align}
		M_{(\tau,\xi)}=\begin{pmatrix}
		1 & -\og_1^1(\tau,\xi) & \cdots & -\og_1^n(\tau,\xi) \\
		1 & -\og_2^1(\tau,\xi) & \cdots & -\og_2^n(\tau,\xi) \\
		\vdots  & \vdots  & \ddots & \vdots  \\
		1 & -\og_n^1(\tau,\xi) & \cdots & -\og_n^n(\tau,\xi) \\
		\f{\tau}{\rt{{\tau}^2+|\xi|^2}} & \f{\xi_1}{\rt{{\tau}^2+|\xi|^2}} & \cdots & \f{\xi_n}{\rt{{\tau}^2+|\xi|^2}} 
		\end{pmatrix}.\label{matrix} 	
		\end{align}
		
		\noindent If $V(\tau,\xi)$ is the $n$ dimensional volume generated by the vectors $\{(1,-\omega^i(\tau,\xi))\}_{1\le i\le n}$, then
		
		\[\mbox{det}M(\tau,\xi)=V(\tau,\xi)\times P(\tau,\xi).\]
		
		\noindent Here $P(\tau,\xi)$ is length of the component of $(\tau,\xi)$ which is perpendicular to the subspace generated by $\{(1,-\omega^i(\tau,\xi))\}_{1\le i\le n}$. Since $(\tau,\xi)$ is at least $\f{\pi}{8}$ angle away from the light cone and the vectors $\{(1,-\omega^i(\tau,\xi))\}_{1\le i\le n}$ lie on the boundary of light cone we have  \[|V(\tau,\xi)|\sin\left(\f{\pi}{8}\right)\le |\mbox{det}M(\tau,\xi)|.\]    
		
		\noindent To compute $V(\tau,\xi)$ we consider the Gramian of $\{(1,-\omega^i(\tau,\xi))\}_{1\le i\le n}$ denoted by $\mc{G}(\tau,\xi)$. For convenience we denote the unit vectors satisfying \eqref{unit vector} as $\omega$ only. We see
		
		\begin{align}
		\mc{G}(\tau,\xi)=&
		\begin{vmatrix}
		(1,-\omega^1)\cdot(1,-\omega^1) & (1,-\omega^1)\cdot(1,-\omega^2)& \dots & (1,-\omega^1)\cdot(1,-\omega^n)\\
		(1,-\omega^2)\cdot(1,-\omega^1) & (1,-\omega^2)\cdot(1,-\omega^2)& \dots & (1,-\omega^n)\cdot(1,-\omega^2)\\
		\vdots  & \vdots       & \ddots & \vdots\\
		(1,-\omega^n)\cdot(1,-\omega^1) & (1,-\omega^n)\cdot(1,-\omega^2)& \dots & (1,-\omega^n)\cdot(1,-\omega^n)
		\end{vmatrix}\nn\\
		=&
		\begin{vmatrix}
		1+\tau^2+\|\tilde{\omega}^1\|^2 & 1+\tau^2+\tilde{\omega}^1\cdot\tilde{\omega}^2& \dots & 1+\tau^2+\tilde{\omega}^1\cdot\tilde{\omega}^n\\
		1+\tau^2+\tilde{\omega}^2\cdot\tilde{\omega}^1 & 1+\tau^2+\|\tilde{\omega}^2\|^2& \dots & 1+\tau^2+\tilde{\omega}^2\cdot\tilde{\omega}^n\\
		\vdots  & \vdots       & \ddots & \vdots\\
		1+\tau^2+\tilde{\omega}^n\cdot\tilde{\omega}^1 & 1+\tau^2+\tilde{\omega}^n\cdot\tilde{\omega}^2& \dots & 1+\tau^2+\|\tilde{\omega}^n\|^2\label{gramian}
		\end{vmatrix}.
		\end{align} 
		
		\noindent We now use multi-linearity property of determinants to expand \eqref{gramian} and the fact that Gramian of $\{\tilde{\omega}^i\}_{1\le i\le n}$ is zero as they were chosen from $\mb{R}^{n-1}$. That is	
		\begin{align}
		\mc{G}(\tau,\xi)=\sum_{k=1}^{n}(1+\tau^2)\ B_k(\tau,\xi),\label{determinant sum}
		\end{align}
		
		where $B_k(\tau,\xi)$ has $k^{th}$ column as $(1,1,....,1)^{t}$ and $j^{th}$ as $(\tilde{\omega}^1\cdot\tilde{\omega}^j,\tilde{\omega}^2\cdot\tilde{\omega}^j,....,\tilde{\omega}^n\cdot\tilde{\omega}^j)^{t}$ for $j\neq k$.
		
		\vspace*{2mm}
		
		\noindent We chose $\tilde{\omega}^n$ as a linear combination of $\tilde{\omega}^{n-2}$ and $\tilde{\omega}^{n-1}$. Thus $B_k(\tau,\xi)=0$ for $1\le k\le n-3$. To calculate rest of the terms in \eqref{determinant sum} we observe  
		
		\begin{align}
		B_{n-2}(\tau,\xi)=&
		\begin{vmatrix}
		\|\tilde{\omega^1}\|^2 & \dots & 1 & \tilde{\omega}^1\cdot\tilde{\omega}^{n-1} & \tilde{\omega}^1\cdot\tilde{\omega}^{n} \\
		\tilde{\omega}^2\cdot\tilde{\omega}^1  &\dots & 1 & \tilde{\omega}^2\cdot\tilde{\omega}^{n-1} & \tilde{\omega}^2\cdot\tilde{\omega}^n \\
		\vdots  & \vdots  & \vdots & \vdots & \vdots \\
		\tilde{\omega}^n\cdot\tilde{\omega}^1 & \dots & 1 & \tilde{\omega}^n\cdot\tilde{\omega}^{n-1} & \|\tilde{\omega}^n\|^2
		\end{vmatrix},\nn   \\
		=&\f{1}{\sqrt{2}}\begin{vmatrix}
		\|\tilde{\omega^1}\|^2 & \dots & 1 & \tilde{\omega}^1\cdot\tilde{\omega}^{n-1} & \tilde{\omega}^1\cdot(\tilde{\omega}^{n-2}+\tilde{\omega}^{n-1}) \\
		\tilde{\omega}^2\cdot\tilde{\omega}^1  &\dots & 1 & \tilde{\omega}^2\cdot\tilde{\omega}^{n-1} & \tilde{\omega}^2\cdot(\tilde{\omega}^{n-2}+\tilde{\omega}^{n-1}) \\
		\vdots  & \vdots  & \vdots & \vdots & \vdots \\
		\tilde{\omega}^n\cdot\tilde{\omega}^1 & \dots & 1 & \tilde{\omega}^n\cdot\tilde{\omega}^{n-1} & \omega^{n}\cdot(\tilde{\omega}^{n-2}+\tilde{\omega}^{n-1})
		\end{vmatrix},\nn
		\end{align}
		
		\begin{align}     
		=&\f{1}{\sqrt{2}}
		\begin{vmatrix}
		\|\tilde{\omega^1}\|^2 & \dots & 1 & \tilde{\omega}^1\cdot\tilde{\omega}^{n-1} & \tilde{\omega}^1\cdot\tilde{\omega}^{n-2} \\
		\tilde{\omega}^2\cdot\tilde{\omega}^1  &\dots & 1 & \tilde{\omega}^2\cdot\tilde{\omega}^{n-1} & \tilde{\omega}^2\cdot\tilde{\omega}^{n-2} \\
		\vdots  & \vdots  & \vdots & \vdots & \vdots \\
		\tilde{\omega}^n\cdot\tilde{\omega}^1 & \dots & 1 & \tilde{\omega}^n\cdot\tilde{\omega}^{n-1} & \tilde{\omega}^n\cdot\tilde{\omega}^{n-2}
		\end{vmatrix},\nn\\
		=&-\f{1}{\sqrt{2}}B_{n}(\tau,\xi).\nn
		\end{align} 
		
		\noindent Similarly we can have $B_{n-1}(\tau,\xi)=-\f{1}{\sqrt{2}}B_{n}(\tau,\xi).$ Consequently we get  
		\begin{align}
		\mc{G}(\tau,\xi)=(1+\tau^2)(1-\sqrt{2})B_{n}(\tau,\xi).\label{determinant 1}
		\end{align}
		
		\noindent Now,
		\begin{align}
		B_{n}(\tau,\xi)=&
		\begin{vmatrix}
		\|\tilde{\omega}\|^2 & \dots & \tilde{\omega}^1\cdot\tilde{\omega}^{n-2} & \tilde{\omega}^1\cdot\tilde{\omega}^{n-1} & 1 \\
		\tilde{\omega}^2\cdot\tilde{\omega}^1  &\dots & \tilde{\omega}^2\cdot\tilde{\omega}^{n-2} & \tilde{\omega}^2\cdot\tilde{\omega}^{n-1} & 1 \\
		\vdots  & \vdots  & \vdots & \vdots & \vdots \\
		\tilde{\omega}^n\cdot\tilde{\omega}^1 & \dots & \tilde{\omega}^n\cdot\tilde{\omega}^{n-2} & \tilde{\omega}^n\cdot\tilde{\omega}^{n-1} & 1
		\end{vmatrix}
		= \begin{vmatrix}
		\|\tilde{\omega^1}\|^2 & \dots & \tilde{\omega}^1\cdot\tilde{\omega}^{n-3} \\
		\tilde{\omega}^2\cdot\tilde{\omega}^1  &\dots & \tilde{\omega}^2\cdot\tilde{\omega}^{n-3} \\
		\vdots  & \vdots & \vdots\\
		\tilde{\omega}^{n-3}\cdot\tilde{\omega}^1 & \dots & \|\tilde{\omega}^{n-3}\|^2
		\end{vmatrix}
		\begin{vmatrix}
		r^2 & 0 & 1\\
		0 & r^2 & 1\\
		\f{r^2}{\sqrt 2} & \f{r^2}{\sqrt 2} & 1
		\end{vmatrix}\nn\\
		=&r^{2(n-1)}(1-\sqrt 2).\label{determinant 2}
		\end{align}
		
		\noindent We combine \eqref{determinant 1} and \eqref{determinant 2} to conclude $0<c\le|V(\tau,\xi)| $ where $c$ is independent of $(\tau,\xi)$. 
		By cofactor expansion of the matrix \eqref{matrix} we solve the linear system [\ref{ray transform}, \ref{divergence free}] as 
		
		\vspace{-5mm}
		
		\begin{align}
		\label{cofactor expansion 1}\widehat{A}_j(\tau,\xi)=\sum_{k=1}^{n}c_{k,j}(\tau,\xi)G(\xi,\omega^k(\tau,\xi)),\qd j\in\{0,1,...,n\},
		\end{align}
		
		\vspace{-3mm}
		
		where $c_{k,j}(\tau,\xi)=\f{1}{\mbox{det} M(\tau,\xi)}C_{j,k}(\tau,\xi)$ and $C_{j,k}(\tau,\xi)$ is $(j,k)^{th}$ cofactor of $M(\tau,\xi)$. 
		
		\vspace{2mm}
		
		\noindent All entries of $M(\tau,\xi)$ have absolute value less than or equal to one and $C_{j,k}(\tau,\xi)$ consist of products of these terms. So there exist $C>0$ independent of $\{(\tau,\xi)\in\mb{S}^n;|\tau|<\f{1}{2}|\xi|\}$ such that the following holds
		
		\[ |\widehat{A}_j(\tau,\xi)|\le C\max\limits_{1\le k\le n}|G(\xi,\omega^k(\tau,\xi))|,\qd \fa j\in\{0,1,..,n\}.\]
		
	\end{proof} 
	
	\vspace*{-5mm}
	
	We now make a modification in the arguments presented above to invert the light-ray transform for partial data case. Before that we make the following observation. 
	
	\begin{lemma}{5.}
		For fixed $(\tau,\xi)\in\mb{R}^{n+1}$ satisfying $|\tau|<\f{\epsilon}{8}|\xi|$,  the following set if non-empty has atleast two linearly independent vectors when $n\ge3$.
		\[\mc{B}_{\tau,\xi}=\{\omega\in\mb{S}^{n+1};\omega\cdot\xi=\tau \mbox{ and }|\omega-\omega_0|<\epsilon\}. \]
		
		\begin{proof}
			Consider the following map defined on the spherical cap $\{\omega\in\mb{S}^{n-1};|\omega-\omega_0|<\epsilon\}$  
			\[f(\omega)=\omega\cdot\xi-\tau.\]
			
			\noindent We assume the zero-set of continuous function $f$ is non-empty and wish to show it contains atleast two linearly independent vectors. If possible let it have only one element say $\bar\omega$. We observe then 
			
			\vspace{-4mm}
			
			\begin{align}
			\{\omega\in\mb{S}^{n-1};\omega\neq\bar\omega\mbox{ and }|\omega-\omega_0|<\epsilon\}=f^{-1}(-\infty,0)\cup f^{-1}(0,\infty).\label{zero set}  
			\end{align}
			
			\noindent For $n\ge3$ we notice the set in the L.H.S of \eqref{zero set} is connected whereas image of it under $f$ is not. Hence $\mc{B}_{\tau,\xi}$ must have two linearly independent vectors. This argument does not hold for the case $n=2$, as punctured spherical caps are not connected there. For $n=2$, we see $\mc{B}_{\tau,\xi}$ has atmost one vector when $\ep>0$ is small enough.
			
		\end{proof}
	\end{lemma}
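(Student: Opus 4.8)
The plan is to exhibit $\mc{B}_{\tau,\xi}$ as the zero level set of a continuous scalar function on the spherical cap and then run a connectedness argument that is available exactly when the cap has dimension $\ge 2$. Write $U=\{\og\in\mb{S}^{n-1}:|\og-\og_0|<\epsilon\}$ for the open cap in the definition, and set $f\colon U\to\mb{R}$, $f(\og)=\og\cdot\xi-\tau$, so that $\mc{B}_{\tau,\xi}=f^{-1}(0)$; by hypothesis this set is nonempty, and I fix some $\bar\og\in\mc{B}_{\tau,\xi}$. (If $\xi=0$ then $\tau=0$ and $\mc{B}_{\tau,\xi}=U$, which trivially contains many linearly independent vectors, so I may assume $\xi\neq0$.)

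First I would verify that $f$ takes both signs on $U$. The tangential gradient at $\bar\og$ of the linear function $\og\mapsto\og\cdot\xi$ is the orthogonal projection of $\xi$ onto $\bar\og^{\perp}$, and this vanishes only when $\bar\og=\pm\xi/|\xi|$, i.e.\ only when $|\tau|=|\xi|$; the hypothesis $|\tau|<\f{\epsilon}{8}|\xi|$ excludes that case (recall $\epsilon$ is small). Since $\bar\og$ lies in the interior of the open set $U$, it follows that every neighbourhood of $\bar\og$ inside $U$ contains points with $f>0$ and points with $f<0$.

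The heart of the proof is then a topological dichotomy. Suppose toward a contradiction that $\mc{B}_{\tau,\xi}=\{\bar\og\}$ is a single point. Then
\[
U\setminus\{\bar\og\}=f^{-1}\big((0,\infty)\big)\,\cup\,f^{-1}\big((-\infty,0)\big),
\]
a disjoint union of two open sets which are both nonempty by the previous step, so $U\setminus\{\bar\og\}$ is disconnected. But for $n\ge3$ the cap $U$ is a connected open subset of the $(n-1)$-dimensional manifold $\mb{S}^{n-1}$ with $n-1\ge2$, and removing a single point from such a set keeps it connected; this contradiction forces $\#\mc{B}_{\tau,\xi}\ge2$. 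To finish, note that when $\epsilon\le1$ no two points of $U$ can be antipodal (else $2=|\og-(-\og)|<2\epsilon$), so any two distinct members of $\mc{B}_{\tau,\xi}$ are linearly independent, which is the assertion.

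The step I expect to be delicate is the connectedness claim and, relatedly, isolating why it breaks at $n=2$: there $U$ is a connected circular arc, its puncture $U\setminus\{\bar\og\}$ splits into two subarcs, and the argument collapses — in fact for small $\epsilon$ one then typically has $\#\mc{B}_{\tau,\xi}\le1$, which is exactly the source of the $n\ge3$ restriction throughout the partial-data analysis.
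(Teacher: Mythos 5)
Your proof is essentially the paper's argument: realize $\mc{B}_{\tau,\xi}$ as the zero set of $f(\og)=\og\cdot\xi-\tau$ on the cap, and derive a contradiction from the fact that a punctured connected open subset of $\mb{S}^{n-1}$ stays connected when $n-1\ge 2$ while $f^{-1}(-\iy,0)\cup f^{-1}(0,\iy)$ would disconnect it. You additionally justify two points the paper leaves implicit --- that both sign sets are nonempty (via the nonvanishing tangential gradient, using $|\tau|<\f{\epsilon}{8}|\xi|$) and that two distinct points of a small cap cannot be antipodal, hence are linearly independent --- which is a welcome tightening but not a different route.
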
  
	
	\vspace{-0.7cm}
	
	\noindent Now for $n\ge 3$, we show $\mc{B}_{\tau,\xi}$ has enough linearly independent vectors so that one can invert the matrix $M_{(\tau,\xi)}$.  
	
	\begin{lemma}{6.}   
		For $n\ge 3$, $\mc{B}_{\tau,\xi}$ has $n$ linearly independent vectors.
	\end{lemma}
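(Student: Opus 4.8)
The plan is to exhibit $\mc{B}_{\tau,\xi}$ as a non-empty, relatively open subset of a genuine $(n-2)$-sphere, and then to deduce purely by linear algebra that such a set spans $\mb{R}^n$, hence contains $n$ linearly independent vectors. Throughout I assume $\tau\neq0$; this costs nothing, since the slice $\{\tau=0\}$ is Lebesgue-null and the Fourier estimate for $\widehat{\mc{A}}$ extends there by continuity (alternatively one shrinks the cone $\mc{C}$ of Corollary 2 to $\{\tau\neq0\}$ and recovers the opposite half-cone from $\widehat{\mc{A}}(-\tau,-\xi)=\overline{\widehat{\mc{A}}(\tau,\xi)}$, valid for real $\mc{A}$).

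First I would fix the geometry. Writing $\omega=\tfrac{\tau}{|\xi|^2}\xi+v$ with $v\in\xi^{\perp}$, the constraints $\omega\cdot\xi=\tau$ and $|\omega|=1$ read $|v|^2=1-\tau^2/|\xi|^2$, which is strictly positive because $|\tau|<\tfrac{\epsilon}{8}|\xi|<|\xi|$. Hence $S_{\tau,\xi}:=\{\omega\in\mb{S}^{n-1}:\omega\cdot\xi=\tau\}$ is an $(n-2)$-sphere of radius $\sqrt{1-\tau^2/|\xi|^2}>0$ lying inside the affine hyperplane $H_{\tau,\xi}:=\{x\in\mb{R}^n:x\cdot\xi=\tau\}$, and $\mc{B}_{\tau,\xi}=S_{\tau,\xi}\cap\{|\omega-\omega_0|<\epsilon\}$ is relatively open in $S_{\tau,\xi}$. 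By the construction carried out in the proof of Corollary 2 (which, for $(\tau,\xi)$ in the cone $\mc{C}$, actually produces a point of $\mc{B}_{\tau,\xi}$), this set is non-empty.

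The heart of the argument is the span computation. Let $W=\mathrm{span}\,\mc{B}_{\tau,\xi}\subseteq\mb{R}^n$ and suppose, for contradiction, that $W\neq\mb{R}^n$. Then $W\subseteq\eta^{\perp}$ for some $\eta\neq0$, so every $\omega\in\mc{B}_{\tau,\xi}$ satisfies $\omega\cdot\xi=\tau$ and $\omega\cdot\eta=0$ simultaneously. Here $\eta$ cannot be parallel to $\xi$: that would force $\tau=0$, which we have excluded. Thus $\xi,\eta$ are linearly independent and $\mc{B}_{\tau,\xi}$ lies in $\{\omega\in\mb{S}^{n-1}:\omega\cdot\xi=\tau,\ \omega\cdot\eta=0\}$, which is a sphere of dimension at most $n-3$ (or a point, or empty) — in any case a closed subset of $S_{\tau,\xi}$ with empty relative interior, i.e. nowhere dense in $S_{\tau,\xi}$. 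But a non-empty relatively open subset of $S_{\tau,\xi}$ cannot be contained in a nowhere dense subset of $S_{\tau,\xi}$. This contradiction forces $W=\mb{R}^n$, so $\mc{B}_{\tau,\xi}$ contains a basis of $\mb{R}^n$, i.e. $n$ linearly independent vectors.

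I expect the one point needing care to be the passage from ``$\mc{B}_{\tau,\xi}$ sits in a sub-sphere of dimension at most $n-3$'' to ``that sub-sphere is nowhere dense in $S_{\tau,\xi}$'', which is precisely where the hypothesis $n\ge3$ is used: it guarantees $\dim S_{\tau,\xi}=n-2\ge1$, so a sub-sphere of dimension at most $n-3$ is strictly lower-dimensional and hence relatively nowhere dense; for $n=2$, $S_{\tau,\xi}$ consists of two points and a small cap catches at most one of them, exactly the obstruction already recorded after Lemma 5. If one wishes to retain $\tau=0$, the same argument shows $\mc{B}_{\tau,\xi}$ has affine hull equal to $H_{\tau,\xi}$, hence contains $n$ affinely independent vectors $\omega^1,\dots,\omega^n$; together with the divergence-free row $(\tau,\xi)$ this still renders $M_{(\tau,\xi)}$ invertible, which is what the remainder of the argument requires.
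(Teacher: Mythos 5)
Your proof is correct, but it follows a genuinely different route from the paper's. The paper works in explicit spherical coordinates with $\xi=e_n$: assuming all of $\mc{B}_{\tau,\xi}$ lies in $\alpha^{\perp}$ for some $\alpha\neq0$, it differentiates the constraint $\alpha\cdot\omega(\theta)=0$ twice in each angle $\theta_2,\theta_3,\dots$ and combines the result with the original identity to annihilate the components $\alpha_n,\alpha_{n-1},\dots$ one at a time. You instead argue softly: $\mc{B}_{\tau,\xi}$ is a non-empty relatively open subset of the $(n-2)$-sphere $S_{\tau,\xi}$ of positive radius, and if it lay in a proper subspace $\eta^{\perp}$ with $\eta\not\parallel\xi$ it would sit inside a sub-sphere of dimension at most $n-3$, which is nowhere dense in $S_{\tau,\xi}$ once $n-2\ge1$ — exactly where $n\ge3$ enters, matching the obstruction recorded after Lemma 5. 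Both arguments hinge on $\tau\neq0$: the paper encodes this silently in the assertion $\cos\bar{\theta}_1\in\left(0,\f{\ep}{8}\right)$ (the subtraction step yielding $\alpha_n\cos\bar{\theta}_1=0$ is vacuous at $\tau=0$), whereas you make it explicit and, usefully, observe that the lemma as literally stated fails at $\tau=0$ (then $\mc{B}_{0,\xi}\subseteq\xi^{\perp}$ can contain at most $n-1$ independent vectors) while the application survives, since $n$ affinely independent $\omega^k$ still make the vectors $(1,-\omega^k)$ linearly independent and hence $M_{(\tau,\xi)}$ invertible together with the divergence-free row. Your version is shorter, coordinate-free, and makes the roles of $n\ge3$ and $\tau\neq0$ transparent; the paper's is more elementary and self-contained but buries the $\tau=0$ degeneracy.
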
  
	
	\begin{proof}
		Without loss of generality we may take $\xi=e_n$, otherwise one consider $x_n$-axis along the vector $\xi$. Our representation of the unit vector $\omega$ in spherical coordinates is as follows
		
		\vspace*{-4mm}
		
		\begin{align*}
		&\omega_1=\sin\theta_1\sin\theta_2....\sin\theta_n,\\
		&\omega_2=\sin\theta_1\sin\theta_2....\cos\theta_n,\\
		&\vdots\\
		&\og_n=\cos\theta_1.
		\end{align*}
		
		\noindent For $\bar{\omega}\in\mc{B}_{\tau,\xi}$ we represent the angles in bar and we have then  $\cos\bar{\theta}_1\in\left(0,\f{\epsilon}{8}\right)$. We want to show that $\mc{B}_{\tau,\xi}$ contains $n$ linearly independent vectors. If not then $\mc{B}_{\tau,\xi}$ lie on a plane in $\mb{R}^{n+1}$, hence we should have a non-zero vector say $\alpha\in\mb{R}^{n}$ such that for $(\theta_2,\dots,\theta_n)$ varying in a neighborhood of $(\bar\theta_2,\dots,\bar\theta_n)$ we have  
		\begin{align}
		\alpha\cdot(\sin\bar\theta_1\sin\theta_2....\sin\theta_n,\sin\bar\theta_1\sin\theta_2....\cos\theta_n,\dots,\sin\bar\theta_1\cos\theta_2,\cos\bar\theta_1)=0\label{perpendicular}
		\end{align}
		We now differentiate \eqref{perpendicular} with respect to $\theta_2$ twice to get the following 
		
		\vspace*{-4mm}
		
		\begin{align}
		-\alpha\cdot(\sin\bar\theta_1\sin\theta_2....\sin\theta_n,\sin\bar\theta_1\sin\theta_2....\cos\theta_n,\dots,\sin\bar\theta_1\cos\theta_2,0)=0\label{derivative theta 2}
		\end{align}
		
		\noindent We now combine \eqref{perpendicular} and \eqref{derivative theta 2} to conclude $\alpha_n=0$ as, $\cos\bar{\theta}_1\in\left(0,\f{\epsilon}{8}\right)$. Now we differentiate \eqref{perpendicular} with respect to $\theta_3$ twice to get 
		
		\vspace*{-4mm}
		
		\begin{align}
		-\alpha\cdot(\sin\bar\theta_1\sin\theta_2....\sin\theta_n,\sin\bar\theta_1\sin\theta_2....\cos\theta_n,\dots,\sin\bar\theta_1\sin\theta_2\cos\theta_3,0,0)=0\label{derivative theta 3}
		\end{align}
		
		\noindent We add \eqref{derivative theta 3} and \eqref{perpendicular} to get $\alpha_{n-1}\sin\bar\theta_1\cos\theta_2=0$ which in turn implies $\alpha_{n-1}\sin\bar\theta_1=0$ as $\theta_2$ was varying in a neighborhood of $\bar\theta_2$. This yields $\alpha_{n-1}$ is actually zero. Proceeding similarly we can show $\alpha=0$ which contradicts our assumption on $\alpha$. 
	\end{proof}
	
	\begin{lemma} {7.} 
		Let $\mc{C}$ be the open cone as described in Corollary 2. Then there exists an open cone $\mc{C}_0\subseteq\mc{C}$ such that the following holds
		\[|\widehat{\mc{A}}(\tau,\xi)|\le C\left(\frac{1}{\lambda^\delta}+ e^{\beta\lambda}\|\Lambda_1-\Lambda_2\|_{*}\right),\qd \m{for }(\tau,\xi)\in\mc{C}_0.\]
	\end{lemma}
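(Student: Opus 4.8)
The plan is to combine the partial-data light-ray estimate of Corollary 2 with the linear-algebra scheme of Lemma 4, the only modification being that the family of directions used to set up the linear system must now be confined to the spherical cap $\{\omega\in\mb{S}^{n-1};\,|\omega-\omega_0|<\epsilon/2\}$ dictated by the partial boundary measurements. Since the matrix $M_{(\tau,\xi)}$ in \eqref{matrix}, its determinant and its cofactors all depend only on the direction of $(\tau,\xi)$, while the right-hand side produced by Corollary 2 is uniform over the cone $\mc{C}$, I would first reduce to $(\tau,\xi)$ lying on the compact cross-section $\mc{C}\cap\mb{S}^n$ and then define $\mc{C}_0$ as the cone over a suitable relatively open subset of it.

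First I would fix $(\tau,\xi)\in\mc{C}$. Since $\mc{C}\subseteq E_\epsilon$ we have $|\tau|<\tfrac{\epsilon}{8}|\xi|$, so (after harmlessly shrinking the threshold to $\epsilon/2$) the slice $\mc{B}_{\tau,\xi}$ is non-empty, and Lemma 6 --- this is exactly where $n\ge3$ enters --- supplies $n$ linearly independent unit vectors $\omega^1(\tau,\xi),\dots,\omega^n(\tau,\xi)$ with $\omega^k\cdot\xi=\tau$ and $|\omega^k-\omega_0|<\epsilon/2$. With these I would build the $(n+1)\times(n+1)$ matrix $M_{(\tau,\xi)}$ of \eqref{matrix} attached to the $n$ ray-transform identities \eqref{ray transform} for the chosen $\omega^k$ together with the divergence-free relation \eqref{divergence free}.

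The crucial step is a lower bound $|\det M_{(\tau,\xi)}|\ge c_0>0$ holding uniformly on a relatively open subset of $\mc{C}\cap\mb{S}^n$. As in Lemma 4 I would factor $\det M_{(\tau,\xi)}=V(\tau,\xi)\,P(\tau,\xi)$, where $V(\tau,\xi)$ is the $n$-volume spanned by $\{(1,-\omega^k(\tau,\xi))\}_{1\le k\le n}$ and $P(\tau,\xi)$ is the length of the component of $(\tau,\xi)/\sqrt{\tau^2+|\xi|^2}$ orthogonal to that span; the condition $|\tau|<\tfrac{\epsilon}{8}|\xi|$ keeps $(\tau,\xi)$ at a fixed positive angle from the light cone, on whose boundary the $(1,-\omega^k)$ lie, so $P(\tau,\xi)$ is bounded below in terms of $\epsilon$. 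For the volume $V(\tau,\xi)$ I would make the choice of the $\omega^k$ explicit and locally constant near a fixed direction --- taking $n-1$ mutually orthogonal vectors on the relevant slice and $\omega^n=\tfrac{1}{\sqrt2}(\omega^{n-2}+\omega^{n-1})$, all inside the cap $|\omega-\omega_0|<\epsilon/2$ (possible after shrinking the cone, by Lemmas 5 and 6) --- and recompute the Gram determinant exactly as in \eqref{gramian}--\eqref{determinant 2}, obtaining $|V(\tau,\xi)|\ge c>0$ independently of the direction; alternatively one invokes continuity of the Gram determinant together with compactness after such a choice. This is the step I expect to be the main obstacle: unlike the full-data case of Lemma 4 one cannot use all of $\mb{S}^{n-1}$, so one must simultaneously guarantee that enough admissible directions exist (which forces $n\ge3$, and fails for $n=2$ by Lemma 5) and that the resulting determinant stays bounded away from zero uniformly over the cone.

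Finally, on the cone $\mc{C}_0$ so obtained I would cofactor-expand as in \eqref{cofactor expansion 1}: $\widehat{A}_j(\tau,\xi)=\sum_{k=1}^{n}c_{k,j}(\tau,\xi)\,G(\xi,\omega^k(\tau,\xi))$ with $c_{k,j}=C_{j,k}/\det M_{(\tau,\xi)}$. Since every entry of $M_{(\tau,\xi)}$ has modulus at most $1$, the cofactors $C_{j,k}$ are bounded, so the lower bound on $|\det M_{(\tau,\xi)}|$ gives $|c_{k,j}(\tau,\xi)|\le C$ uniformly on $\mc{C}_0$. Because each $\omega^k$ lies in the cap $|\omega-\omega_0|<\epsilon/2$ and satisfies $\omega^k\cdot\xi=\tau$, Corollary 2 bounds $|G(\xi,\omega^k(\tau,\xi))|\le C(\lambda^{-\delta}+e^{\beta\lambda}\|\Lambda_1-\Lambda_2\|_*)$, and summing over $k$ yields $|\widehat{A}_j(\tau,\xi)|\le C(\lambda^{-\delta}+e^{\beta\lambda}\|\Lambda_1-\Lambda_2\|_*)$ for every $j\in\{0,\dots,n\}$, hence the asserted estimate for $|\widehat{\mc{A}}(\tau,\xi)|$ on $\mc{C}_0$.
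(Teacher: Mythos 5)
Your overall architecture matches the paper's: use Lemma 6 to extract $n$ linearly independent directions from $\mc{B}_{\tau,\xi}$, couple the corresponding ray-transform identities with the divergence-free relation to form the system with matrix $M_{(\tau,\xi)}$, obtain a uniform lower bound on $|\det M_{(\tau,\xi)}|$ over a sub-cone, and conclude by cofactor expansion together with Corollary 2 and the degree-zero homogeneity of $M_{(\tau,\xi)}$ in $(\tau,\xi)$.

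The one place where your preferred route would fail is the determinant lower bound. You propose to replicate the explicit construction of Lemma 4, choosing $n-1$ mutually orthogonal vectors on the slice $\{\omega:\omega\cdot\xi=\tau\}$ \emph{all inside the cap} $|\omega-\omega_0|<\epsilon/2$, and then recomputing the Gramian as in \eqref{gramian}--\eqref{determinant 2}. This is impossible for small $\epsilon$: any two unit vectors in that cap have inner product close to $1$, so they cannot be orthogonal, and Lemmas 5 and 6 give you only linear independence, never orthogonality. This is precisely why the paper abandons the explicit Gramian computation in the partial-data case. The paper's actual argument is the one you list only as a fallback: fix $(\tau_0,\xi_0)\in\mc{C}\cap\mb{S}^n$, note that the $n$ independent vectors $(1,-\omega_k(\tau_0,\xi_0))$ together with the orthogonality of $(\tau_0,\xi_0)$ to their span force $\det M_{(\tau_0,\xi_0)}\neq 0$; choose the $\omega_k(\tau,\xi)$ to depend continuously on $(\tau,\xi)$ in a neighborhood $\tilde{\mc{C}}$ of $(\tau_0,\xi_0)$ in $\mb{S}^n$; pass to a compactly contained open subset $\tilde{\tilde{\mc{C}}}$ to get $0<c\le|\det M_{(\tau,\xi)}|$; and set $\mc{C}_0=\cup_{r>0}r\tilde{\tilde{\mc{C}}}$. (Note also that $\mc{C}\cap\mb{S}^n$ is relatively open, not compact, so the restriction to a compactly contained subset is needed, not merely cosmetic.) Since you do state this alternative and the rest of your argument --- bounded cofactors, $G$ controlled by Corollary 2, summation over $k$ --- coincides with the paper's, the proposal is salvageable as written, but the primary construction you lead with should be discarded.
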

	
	\begin{proof}
		Let $(\tau_0,\xi_0)\in \mc{C}\cap\mb{S}^n$ then from Lemma we have a set of $n$ linearly independent vectors from $\mc{B}_{\tau,\xi}$ denoted by $\{\omega_k(\tau_0,\xi_0)\}_{1\le k\le n}$ which implies that the vectors $\{(1,-\omega_k(\tau,\xi))\}_{1\le k\le n}$ are linearly independent too. As $(\tau_0,\xi_0)$ is perpendicular to each of those vectors we get the matrix $M_{(\tau_0,\xi_0)}$ is invertible. Now as we change $(\tau,\xi)$ continuously in a neighborhood of $(\tau_0,\xi_0)$ in $\mb{S}^n$ say $\tilde{\mc{C}}$, we notice the hyperplane $\xi\cdot x=\tau$ moves in a continuous way. Hence we get $n$ linearly independent vectors from $\mc{B}_{\tau,\xi}$ denoted as $\{\omega_k(\tau,\xi)\}_{1\le k\le n}$ depending continuously on $(\tau,\xi)$. As before $M_{(\tau,\xi)}$ becomes invertible for $(\tau,\xi)\in\tilde{\mc{C}}$. For a compactly contained open set in $\tilde{\mc{C}}$ say $\tilde{\tilde{\mc{C}}}$ we have 
		$0<c\le|det M_{(\tau,\xi)}|$. Now for $(\tau,\xi)\in\tilde{\tilde{\mc{C}}}$ we consider the system of equations 
		
		\begin{align}
		\label{system1}(1,-\og_k(\tau,\xi))\cdot \widehat{\mc{A}}(\tau,\xi)&=G(\xi,\omega_k(\tau,\xi)),\qd\m{ for }k\in\{1,2,...n\},\\
		\label{system2}\f{1}{\sqrt{\tau^2+\xi^2}}(\tau,\xi)\cdot \widehat{\mc{A}}(\tau,\xi)&=0, \ \ (\m{This is because }\mc{A}\m{ is divergence-free}).
		\end{align}
		
		\noindent We observe that Corollary 2 gives an upper bound of $G$ in the cone $\mc{C}$ and $M_{(\tau,\xi)}$ is a non singular homogeneous matrix of degree zero. We use these facts to obtain estimates of the vector potentials in an open cone in $\mb{R}^{n+1}$ from the estimates over $\tilde{\tilde{\mc{C}}}$. We proceed as in \eqref{cofactor expansion 1} to get from \eqref{system1} and \eqref{system2} for $(\tau,\xi)\in\tilde{\tilde{\mc{C}}}$ and $r>0$  	
		\begin{align}
		\label{cofactor expansion 2}\widehat{A}_j(r\tau,r\xi)&=\sum_{k=1}^{n}c_{k,j}(r\tau,r\xi)G(r\xi,\omega^k(r\tau,r\xi)),\qd j\in\{0,1,...,n\}\\
		&=\sum_{k=1}^{n}c_{k,j}(\tau,\xi)G(r\xi,\omega^k(\tau,\xi)).
		\end{align}
		where $c_{k,j}(\tau,\xi)=\f{1}{\mbox{det} M(\tau,\xi)}C_{j,k}(\tau,\xi)$ and $C_{j,k}(\tau,\xi)$ is $(j,k)^{th}$ cofactor of $M(\tau,\xi)$. Hence for the open cone $\mc{C}_0\ (\equiv\cup_{r>0}r\tilde{\tilde{\mc{C}}})$ in $\mb{R}^{n+1}$ we use Corollary to obtain $C>0$	such that 
		\begin{align}
		|\widehat{A}_j(\tau,\xi)|\le C\left(\frac{1}{\lambda^\delta}+ e^{\beta\lambda}\|\Lambda_1-\Lambda_2\|_{*}\right)\qd\m{ for }j\in\{0,1,...,n\}.\label{oncone}
		\end{align}
	\end{proof}
   
     \vspace{-0.7cm}
     \noindent \tb{{Remark}:} We make a remark for the specific case when there is no time-derivative perturbation present that is $A_0(t,x)=0$ in $Q$. Then we can get the exactly same Fourier estimates over an open cone as obtained in \eqref{oncone} for the rest of the components of the vector potential without the divergence-free assumption. This is because for a fixed $(\tau,\xi)\in\tilde{\tilde{\mc{C}}}$ we have $n$ linearly independent vectors $\{\og_k(\tau,\xi)\}_{1\le k\le n}$ as before which makes \eqref{system1} an invertible system consisting of $n$ unknowns to be determined from $n$ equations. After that one carries out the arguments presented below to arrive at stability results similar to \eqref{main result 1} and \eqref{main result 2}. Thus we do not need any divergence-free condition (with respect to space variables) on the vector potentials to obtain stability results. \\
	
	As mentioned earlier to get Fourier estimate of vector potentials over arbitrary large balls we need to use Vessella's analytic cotinuation argument (see \cite{vassella}) to $\eqref{oncone}$.
	To do so, we define \[f_k(t,x)=\widehat{\mc{A}}(kt,kx)\qd\m{ for }k>0\m{ and }(t,x)\in\R^{n+1}.\]   
	Then $f_k$ is an analytic function satisfying the following estimate for multi-index $\gamma$
	\begin{align}      
	&\qd|\partial_{(t,x)}^{\gamma}f_k(t,x)|=\ |\partial_{(t,x)}^{\gamma}\widehat{\mc{A}}(kt,kx)|,\nn\\
	&=\left|\int_{\R^{1+n}}e^{-ik(s,y)\cdot (t,x)}(-i)^{|\gamma|}k^{|\gamma|}(s^2+|y|^2)^{\f{|\gamma|}{2}}\mc{A}(s,y)\ dsdy\right|, \nn \\
	&\le(2T^2)^{\f{|\gamma|}{2}}k^{|\gamma|}\int_{\R^{1+n}}|\mc{A}(s,y)|\ dsdy,\qd (\m{as, diam}(\Omega)< T)\nn \\
	&\le C_*\ (2T^2)^{\f{|\gamma|}{2}}\ k^{|\gamma|}=C_*(\rt{2}T)^{|\gamma|}|\gamma|!\ \f{k^{|\gamma|}}{|\gamma|!}.\qd (\m{using a-priori estimates of}\ \mc{A})\nn
	\end{align} 
	
	\noindent Hence we get, 
	\begin{align}
	&\qd|\partial_{(t,x)}^{\gamma}f_k(t,x)|\le C_*e^{k}\f{|\gamma|!}{(T^{-1})^{|\gamma|}}\ \m{ for }(t,x)\in\R^{n+1}\m{ and multi-index }\gamma. \label{prevessellla1}  
	\end{align}
	Now we appeal to Vessella's conditional stability result  $\cite{vassella}$ to $f_k$ as it satisfies \eqref{prevessellla1}. We have then 
	\begin{align}       
	\|f_k\|_{L^\infty(B(0,1))}\le Ce^{k(1-\theta)}\ \|f_k\|_{L^\infty(\mc{C}_0\cap B(0,1))}^\theta,\qd \m{for some}\ \theta\in(0,1). \label{vassella}
	\end{align}
	Since $\|f_k\|_{L^\infty(B(0,1))}=\|\widehat{\mc{A}}\|_{L^\infty(B(0,k))}$, using Lemma 4 we get from \eqref{vassella}
	\begin{align}  
	\|\widehat{\mc{A}}\|_{L^\infty(B(0,k))}\le Ce^{k(1-\theta)}\left(\frac{1}{\lambda^\delta}+ e^{\beta\lambda}\|\Lambda_1-\Lambda_2\|_{*}\right)^{\theta}.\label{vassella2}
	\end{align}  
	Now using \eqref{vassella2} with the a-priori assumption on the potentials, we express the Sobolev norm of $\mc{A}$ in terms of the input-output operator 
	
	\begin{align}    
	&\|\mc{A}\|_{H^{-1}(\R^{1+n})}^{\f{2}{\theta}}=\left(\int_{\R^{1+n}}(1+s^2+|y|^2)^{-1}|\widehat{\mc{A}}(s,y)|^2dsdy\right)^{\f{1}{\theta}},\nn\\
	&=\left(\int\limits_{B(0,k)}(1+s^2+|y|^2)^{-1}|\widehat{\mc{A}}(s,y)|^2dsdy+\int\limits_{B(0,k)^c}(1+s^2+|y|^2)^{-1}|\widehat{\mc{A}}(s,y)|^2dsdy\right)^{\f{1}{\theta}},\nn\\
	&\le C\left(k^{n+1}\|\widehat{\mc{A}}\|_{L^\infty(B(0,k))}^2+\f{1}{k^2}\right)^{\f{1}{\theta}},\nn\\	
	&\le C\left( k^{\f{n+1}{\theta}}e^{\f{2k(1-\theta)}{\theta}}(\frac{1}{\lambda^{2\delta}}+ e^{\beta\lambda}\|\Lambda_1-\Lambda_2\|_{*}^2)+\f{1}{k^{\f{2}{\theta}}}\right),\qd(\m{using}\ \eqref{vassella2})\nn\\ 
	&\le C\left(\underset{I}{\underbrace{\f{k^{\f{n+1}{\theta}}e^{\f{2k(1-\theta)}{\theta}}}{\ld^{2\delta}}}}
	+\underset{II}{\underbrace{k^{\f{n+1}{\theta}}e^{\beta\ld+\f{2k(1-\theta)}{\theta}}\|\Lambda_1-\Lambda_2\|_{*}^2}}+\underset{III}{\underbrace{\f{1}{k^{\f{2}{\theta}}}}}\right).\label{last1}
	\end{align}
	To make (I) and (III) comparable we need to choose large enough $k$ such that $$\ld=e^{\f{k(1-\theta)}{\theta\delta}}k^{\f{n+3}{2\theta\delta}}.$$ Then (II) becomes \[{k^{\f{n+1}{\theta}}e^{\beta(e^{\f{k(1-\theta)}{\theta\delta}}k^{\f{n+3}{2\theta\delta}})+\f{2k(1-\theta)}{\theta}}\|\Lambda_1-\Lambda_2\|_{*}^2}.\]
	There exists $L_{k_0}>0$ which depends on $\theta>0,\delta>0,\ld_0>0$ only such that 
	\begin{align}k^{\f{n+1}{\theta}}e^{\beta(e^{\f{k(1-\theta)}{\theta\delta}}k^{\f{n+3}{2\theta\delta}})+\f{2k(1-\theta)}{\theta}}\|\Lambda_1-\Lambda_2\|^2\le e^{e^{L_{k_0}k}}\|\Lambda_1-\Lambda_2\|_{*}^2.\label{kchoice}
	\end{align}
	
	\noindent Now we choose $k=\f{1}{L_{k_0}}\log\big|\log\|\Lambda_1-\Lambda_2\|_{*}\big|$. Then from \eqref{last1} and \eqref{kchoice}
	\begin{align}    
	&\|\mc{A}\|_{H^{-1}(\mathbb{R}^{n+1})}^{\frac{2}{\theta}} \le C\left(\|\Lambda_1-\Lambda_2\|_{*}
	+\left(\log|\log\|\Lambda_1-\Lambda_2\|_{*}|\right)^{\f{-2}{\theta}}\right).\label{last0}
	\end{align}    
	Since we need to satisfy $\ld\ge\ld_0$, validity of our above choice of k depends on smallness of the input-output operator, that is when $\|\Lambda_1-\Lambda_2\|_*\le c_*$ (where $c_*$ depends only on $\ld_0$). If it is not the case that is $\|\Lambda_1-\Lambda_2\|_*>c_*$ we can do the following  
	\begin{align}    
	\|\mc{A}\|_{H^{-1}(\mathbb{R}^{n+1})}^{\frac{2}{\theta}} \le C\|\mc{A}\|_{L^\infty(\mathbb{R}^{n+1})}^{\frac{2}{\theta}}\le \f{C}{c_*}{c_*}&\le \f{C}{c_*}\|\Lambda_1-\Lambda_2\|_{*}\ \label{last2}
	\end{align}
	So in both of the cases discussed above we get from \eqref{last0} and \eqref{last2}
	\begin{align}  \|\mc{A}\|_{H^{-1}(\mathbb{R}^{n+1})} \le C\left(\|\Lambda_1-\Lambda_2\|_*^{\f{\theta}{2}}+\big|\log|\log\|\Lambda_1-\Lambda_2\|_{*}|\big|^{-1}\right).\label{last3} 
	\end{align}
	We can translate the above norm estimates for much stronger Sobolev norms using a convexity argument.
	
	\vspace*{-7mm}	 
	\begin{corollary}{3.} 
		For some $ \mu_1,\mu_2,\kappa_1,\kappa_2\in(0,1)$ and $C>0$ we have 
		\[\|\mc{A}\|_{L^\infty{(\mathbb{R}^{n+1})}}\le C\left(\|\Lambda_1-\Lambda_2\|_{*}^{\mu_1}+\big|\log|\log\|\Lambda_1-\Lambda_2\|_{*}|\big|^{-\mu_2}\right).\]
		\[\m{and, }\|\mc{A}\|_{H^{1}{(\mathbb{R}^{n+1})}}\le C\left(\|\Lambda_1-\Lambda_2\|_{*}^{\kappa_1}+\big|\log|\log\|\Lambda_1-\Lambda_2\|_{*}|\big|^{-\kappa_2}\right).\]
	\end{corollary}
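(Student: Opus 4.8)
The plan is to upgrade the weak estimate \eqref{last3} for $\mc{A}=\mc{A}_2-\mc{A}_1$ to the stated $L^\infty$ and $H^1$ bounds by a Sobolev interpolation (logarithmic convexity) argument against the a priori regularity built into the class $\mc{M}(C_0,\alpha)$. Since $\mc{A}_1,\mc{A}_2\in C_c^\infty(Q)^{n+1}$, the difference $\mc{A}$, extended by zero outside $Q$, lies in $H^{\frac{n+1}{2}+\alpha}(\mathbb{R}^{n+1})$ with $\|\mc{A}\|_{H^{\frac{n+1}{2}+\alpha}(\mathbb{R}^{n+1})}\le C(Q,\alpha)C_0$, because for functions supported in the fixed compact set $\bar Q$ the norms of $H^{s}(Q)$ and $H^{s}(\mathbb{R}^{n+1})$ are equivalent; this extension is already the convention under which \eqref{last3} was derived. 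Hence the only two ingredients are \eqref{last3} and this a priori bound, and all norms below are taken on $\mathbb{R}^{n+1}$.

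First I would fix an exponent $s$ with $\frac{n+1}{2}<s<\frac{n+1}{2}+\alpha$, which is possible exactly because $\alpha>0$ — this is precisely why the threshold $\frac{n+1}{2}+\alpha$ appears in the definition of $\mc{M}(C_0,\alpha)$. The standard interpolation inequality for Bessel potential spaces gives, with $\rho\in(0,1)$ the solution of $s=-\rho+(1-\rho)\big(\frac{n+1}{2}+\alpha\big)$,
\[
\|\mc{A}\|_{H^{s}(\mathbb{R}^{n+1})}\le C\,\|\mc{A}\|_{H^{-1}(\mathbb{R}^{n+1})}^{\rho}\,\|\mc{A}\|_{H^{\frac{n+1}{2}+\alpha}(\mathbb{R}^{n+1})}^{1-\rho}\le C(C_0,\alpha,Q)\,\|\mc{A}\|_{H^{-1}(\mathbb{R}^{n+1})}^{\rho},
\]
and the Sobolev embedding $H^{s}(\mathbb{R}^{n+1})\hookrightarrow L^\infty(\mathbb{R}^{n+1})$, valid precisely because $s>\frac{n+1}{2}$, yields $\|\mc{A}\|_{L^\infty(\mathbb{R}^{n+1})}\le C\,\|\mc{A}\|_{H^{-1}(\mathbb{R}^{n+1})}^{\rho}$. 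Plugging \eqref{last3} into this and using the elementary subadditivity $(a+b)^{\rho}\le a^{\rho}+b^{\rho}$ for $a,b\ge 0$ and $\rho\in(0,1)$, I obtain
\[
\|\mc{A}\|_{L^\infty(\mathbb{R}^{n+1})}\le C\Big(\|\Lambda_1-\Lambda_2\|_{*}^{\rho\theta/2}+\big|\log|\log\|\Lambda_1-\Lambda_2\|_{*}|\big|^{-\rho}\Big),
\]
which is the first assertion with $\mu_1=\rho\theta/2\in(0,1)$ and $\mu_2=\rho\in(0,1)$, where $\theta\in(0,1)$ is the exponent furnished by Vessella's theorem in \eqref{vassella}.

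For the $H^1$ bound nothing new is needed: since $n\ge 3$ we have $-1<1<\frac{n+1}{2}+\alpha$, so interpolating directly between $H^{-1}$ and $H^{\frac{n+1}{2}+\alpha}$ with the exponent $\sigma\in(0,1)$ determined by $1=-\sigma+(1-\sigma)\big(\frac{n+1}{2}+\alpha\big)$ gives $\|\mc{A}\|_{H^{1}(\mathbb{R}^{n+1})}\le C\,\|\mc{A}\|_{H^{-1}(\mathbb{R}^{n+1})}^{\sigma}$, and substituting \eqref{last3} together with the same subadditivity of $t\mapsto t^{\sigma}$ produces the second assertion with $\kappa_1=\sigma\theta/2$ and $\kappa_2=\sigma$. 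The whole argument is routine; the only step that genuinely uses a hypothesis is the requirement of an exponent $s$ strictly between $\frac{n+1}{2}$ and $\frac{n+1}{2}+\alpha$ in the $L^\infty$ part, so the "main obstacle" here amounts to little more than careful bookkeeping with the Sobolev embedding threshold and with the constants and exponents picked up from \eqref{last3}.
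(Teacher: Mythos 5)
Your proposal is correct and follows essentially the same route as the paper: interpolate (logarithmic convexity of Sobolev norms) between $H^{-1}(\mathbb{R}^{n+1})$ and the a priori bounded $H^{\frac{n+1}{2}+\alpha}$ norm, land above the Sobolev embedding threshold $\frac{n+1}{2}$ for the $L^\infty$ bound, and substitute \eqref{last3}. Your version is in fact slightly more careful than the paper's, since you make the subadditivity step $(a+b)^{\rho}\le a^{\rho}+b^{\rho}$ explicit and compute the $H^{1}$ interpolation exponent $\sigma$ separately rather than reusing the exponent from the $L^\infty$ case.
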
     
	\begin{proof}  There exists $\eta\in(0,1)$ such that, $\f{n+1}{2}+\f{\al}{2}=\eta(\f{n+1}{2}+\al)+(1-\eta)(-1).$ \\
		Since $\|\mc{A}\|_{H^{\f{n+1}{2}+\al}(\R^{n+1})}\le C$ 
		using Sobolev embedding and logarithmic convexity of Sobolev norms we obtain 
		\begin{align} 
		\|\mc{A}\|_{L^\infty{(\mathbb{R}^{n+1})}}\nn&\le C\|\mc{A}\|_{H^{\f{n+1}{2}+\f{\al}{2}}(\R^{n+1})}\\
		&\le C\|\mc{A}\|_{H^{\f{n+1}{2}+\al}(\R^{n+1})}^{\eta}\|\mc{A}\|_{H^{-1}(\mathbb{R}^{n+1})}^{1-\eta}\nn\\
		&\le C\left(\|\Lambda_1-\Lambda_2\|_{*}^{\f{\theta}{2}}+\big|\log|\log\|\Lambda_1-\Lambda_2\|_{*}|\big|^{-1}\right)^{1-\eta}.\ (\m{using }\eqref{last3})\nn
		\end{align}
		Hence there exists $\mu_1,\mu_2\in(0,1)$ such that 
		\begin{align} 
		\|\mc{A}\|_{L^\infty{(\mathbb{R}^{n+1})}}\le C\left(\|\Lambda_1-\Lambda_2\|_{*}^{\mu_1}+\big|\log|\log\|\Lambda_1-\Lambda_2\|_{*}|\big|^{-\mu_2}\right)\ \mu_1,\mu_2\in(0,1).\label{LAST1}
		\end{align}	
		Similarly one can get	
		\begin{align}
		\|\mc{A}\|_{H^{1}{(\mathbb{R}^{n+1})}}\le C\|\mc{A}\|_{H^{-1}(\mathbb{R}^{n+1})}^{1-\eta}\le C\left(\|\Lambda_1-\Lambda_2\|_{*}^{\kappa_1}+\big|\log|\log\|\Lambda_1-\Lambda_2\|_{*}|\big|^{-\kappa_2}\right).\ \label{LAST3}
		\end{align}
		
	\end{proof}
	
	\subsection{\tb{{Stability estimate for scalar potential}}} 
	Proving stability of scalar potentials will be slightly different from that of vector potentials. In the process of getting integral identity and estimates we divided \eqref{estimate} by large enough $\ld$ which made the scalar potential term disappear. Now we will use explicit uniform norm estimate for vector potentials rather than dividing by $\ld$. So we have to make necessary changes in Lemma 2. Then getting suitable light ray transform of scalar potential or arriving at Fourier estimates of scalar potential  will be same as before. In this section, using uniform norm estimates for the vector potentials and Vessella's analytic continuation argument, Fourier estimate of scalar potential over arbitrary large balls will be shown resulting in log-log-log stability of scalar potential in terms of input-output operator.\\

	\begin{proof}
		We have	$$\mathcal{L}_{\mathcal{A}_1,q_1}u(t,x)=(2\mathcal{A}\cdot(\partial_t,-\nabla_x)u_2+\tilde{q}u_2)(t,x). $$
		Now using H\"older inequality, a-priori bounds of $\mc{A}_i,q_i$ and properties of geometric optics solutions from \eqref{CGO1},\eqref{CGO2},\eqref{derivativeCGO1} and \eqref{CGO4} we have		
		\begin{align}
		\|e^{-(t+x\cdot\og)}\mathcal{L}_{\mathcal{A}_1,q_1}u||_{L^2(Q)}^2\le  C(\ld^2\|\mc{A}\|_{L^\infty(Q)}^2+1)\|\phi\|_{H^3(\R^n)}^2\label{new1}.
		\end{align}
		Hence we obtain from \eqref{finaltime}, \eqref{boundarygreen2} and \eqref{new1}
		\begin{align}\mc{K}\le C(\lambda^2\|\mc{A}\|_{L^\infty(Q)}^2+1+ e^{\beta\lambda}\|\Lambda_1-\Lambda_2\|_{*}^2)\|\phi\|_{H^3(\mathbb{R}^n)}^2. 
		\end{align}
		
		\noindent Now we proceed as before to have estimates over light ray transform of scalar potential and in this situation it will include uniform norm of vector potentials on $Q$. We see 
		\begin{align} 
		&\left|\int\limits_{Q}(2\mathcal{A}\cdot(\partial_t,-\nabla_x)u_2+\tilde{q}u_2)\overline{v}\ dxdt\right|\le C\left(\sqrt{\frac{\mc{K}}{\lambda}}\|\phi\|_{H^3(\mathbb{R}^n)}+ e^{\beta\lambda}\|\Lambda_1-\Lambda_2\|_{*}\|\phi\|_{H^3(\mathbb{R}^n)}^2\right),\nn\\
		&\m{or, }\left|\int_{Q}\tilde{q}(t,x)u_2(t,x)\overline{v(t,x)}\ dxdt\right|\le C(\ld\|\mc{A}\|_{L^\infty(Q)}+\f{1}{\rt{\ld}}+ e^{\beta\lambda}\|\Lambda_1-\Lambda_2\|_{*})\|\phi\|_{H^3(\mathbb{R}^n)}^2,\nn\\
		&\m{or, }\left|\int_{Q}\tilde{q}(t,x)|\phi(x+t\og)|^2  e^{-\int_{0}^{t}(1,-\og)\cdot\mc{A}(s,x+(t-s)\og)ds}dxdt\right|\nn\\
		&\qd\qd\qd\qd\ \ \ \ \qd\qd\qd\ \ \ \qd\le C\left(\ld\|\mc{A}\|_{L^\infty(Q)}+\f{1}{\rt{\ld}}+ e^{\beta\lambda}\|\Lambda_1-\Lambda_2\|_{*}\right)\|\phi\|_{H^3(\mathbb{R}^n)}^2. \label{R1}
		\end{align}
		Now using mean value theorem and a-priori bounds of vector potentials we get
		\begin{align}
		\big|e^{-\int_{0}^{t}(1,-\og)\cdot\mc{A}(s,x+(t-s)\og)ds}-1\big|\le C\|\mc{A}\|_{L^{\infty}(Q)}\qd\m{ for all }\ t\in[0,T].\label{M.V.T}
		\end{align}
		Thus we use \eqref{R1} and \eqref{M.V.T} to get
		\begin{align}  
		\left|\int_{Q}\tilde{q}(t,x)|\phi(x+t\og)|^2\ dxdt\right|&\le \left|\int_{Q}\tilde{q}(t,x)|\phi(x+t\og)|^2\left(e^{-\int_{0}^{t}(1,-\og)\cdot\mc{A}(s,x+(t-s)\og)ds}-1\right)dxdt\right|\nn\\
		& \qd +\left|\int_{Q}\tilde{q}(t,x)|\phi(x+t\og)|^2e^{-\int_{0}^{t}(1,-\og)\cdot\mc{A}(s,x+(t-s)\og)ds}dxdt\right|,\nn\\
		&\le C\left(\ld\|\mc{A}\|_{L^\infty(Q)}+\f{1}{\rt{\ld}}+ e^{\beta\lambda}\|\Lambda_1-\Lambda_2\|_{*}\right)\|\phi\|_{H^3(\mathbb{R}^n)}^2. \label{R2}
		\end{align}
		Following similar steps as in Lemma 3 and Corollary 2 we have $\gamma,\delta>0$ such that
		\begin{align}
		\left|\int_{0}^{T}\tilde{q}(s,x-ws)ds\right|\ \m{and,}\ \|\hat{\tilde{q}}\|_{L^\infty(\mc{C})}\le C\left(\ld^{\gamma}\|\mc{A}\|_{L^\infty(Q)}+\f{1}{\ld^{\delta}}+ e^{\beta\lambda}\|\Lambda_1-\Lambda_2\|_{*}\right).\label{R} 
		\end{align}
		
		\noindent
		Using $\eqref{R}$ and Vessella's analytic continuation result as done in $\eqref{vassella2}$ we get for $k$ large 
		\begin{align}
		&\|\widehat{\tilde{q}}\|_{L^\infty(B(0,k))}\ \le Ce^{k(1-\theta)}(\ld^{\gamma}\|\mc{A}\|_{L^\infty(Q)}+\f{1}{\ld^{\delta}}+ e^{\beta\lambda}\|\Lambda_1-\Lambda_2\|)^{\theta}. \label{overball}\\
		\m{Then, }\qd&\|\tilde{q}\|_{H^{-1}{(\R^{n+1})}}^{\frac{2}{\theta}}\le C\left(k^{\f{n+1}{\theta}}e^{\f{2k(1-\theta)}{\theta}}\left(\ld^{2\gamma}\|\mc{A}\|_{L^\infty(Q)}^2+\f{1}{\ld^{2\delta}}+e^{\beta\ld}\|\Lambda_1-\Lambda_2\|^2\right)+k^{-\frac{2}{\theta}}\right).\label{sc1}
		\end{align}
		Now as in $\eqref{last1}$ choose $\ld=e^{\f{k(1-\theta)}{\theta\delta}}k^{\f{n+3}{2\theta\delta}}$. For some $L_{k_0}>0$, using $\eqref{LAST1}$ R.H.S of 
		$\eqref{sc1}$ can be dominated by the following term 
		\begin{align}
		e^{L_{k_0}k}\|\Lambda_1-\Lambda_2\|_{*}^{2\mu_1}+e^{L_{k_0}k}\big|\log|\log\|\Lambda_1-\Lambda_2\|_{*}|\big|^{-2\mu_2}+e^{e^{L_{k_0}k}}\|\Lambda_1-\Lambda_2\|_{*}^2+k^{-\frac{2}{\theta}}.\label{sc2}
		\end{align}
		Choose $e^{L_{k_0}k}=\log|\log\|\Lambda_1-\Lambda_2\|_{*}|^{\mu_2}$. That is, $k=\f{\mu_2}{L_{k_0}}\log\log\big|\log\|\Lambda_1-\Lambda_2\|_{*}\big|$.\\
		Using the choice of $k$ above and $\mu_2<1$, R.H.S of $\eqref{sc2}$ can have following upper bound    
		\begin{align} {{\|\Lambda_1-\Lambda_2\|_{*}^{2\mu_1}\big|\log|\log\|\Lambda_1-\Lambda_2\|_{*}|\big|^{\mu_2}}}+{\big|\log|\log\|\Lambda_1-\Lambda_2\|_{*}|\big|^{-\mu_2}}\nn\\
		+{{\|\Lambda_1-\Lambda_2\|_{*}^{2}\left|\log\|\Lambda_1-\Lambda_2\|_{*}\right|}}+\big|\log|\log\|\Lambda_1-\Lambda_2\|_{*}|\big|^{-\frac{2}{\theta}}. \label{sc3}
		\end{align}
		Assume $\|\Lambda_1-\Lambda_2\|_{*}$ small enough say $\|\Lambda_1-\Lambda_2\|_{*}<c_*$ such that 
		\begin{align}
		&a)\ \m{choice of k above is valid}.\label{condition1}\\
		&b)\ \|\Lambda_1-\Lambda_2\|_{*}^{2\mu_1}\le C(\log|\log\|\Lambda_1-\Lambda_2\|_{*}|)^{-2\mu_2}\ (\m{for }\al>0,\ \lim\limits_{x\to 0+}x^{\al}\log|\log x|=0).\label{condition2}\\
		&c)\ \left(\log|\log\|\Lambda_1-\Lambda_2\|_{*}|\right)^{-\mu_2}\le (\log\log\left|\log\|\Lambda_1-\Lambda_2\|_{*}\right|)^{-\mu_2}\ (\m{as, }\log x \le x).\label{condition3}\\
		&d)\ \|\Lambda_1-\Lambda_2\|_{*}\left|\log\|\Lambda_1-\Lambda_2\|_{*}\right|\le C\ \ (\m{for }\al>0,\ \lim\limits_{x\to 0+}x^{\al}\log x=0).\label{condition4}
		\end{align} 
		Combining \eqref{condition1},\eqref{condition2},\eqref{condition3} and \eqref{condition4} we dominate \eqref{sc2} by the following term \begin{align}\|\Lambda_1-\Lambda_2\|_{*}^{\nu_1}+\left(\log\log\left|\log\|\Lambda_1-\Lambda_2\|_{*}\right|\right)^{-\nu_0}\m{ for some }\nu_1,\nu_0>0.\label{scalar1}
		\end{align} 
		When $\|\Lambda_1-\Lambda_2\|_{*}\ge c_*$ we may proceed as $\eqref{last2}$ to get similar estimates as \eqref{scalar1}. Hence by Sobolev embedding and logarithmic convexity of Sobolev norms we get 
		\begin{align}
		\|\tilde{q}\|_{L^\infty{(\R^{n+1})}}\le C\left(\|\Lambda_1-\Lambda_2\|_{*}^{\nu_1}+\Big|\log\big|\log|\log\|\Lambda_1-\Lambda_2\|_{*}|\big|\Big|^{-\nu_2}\right)\qd \ \m{for some } \nu_1,\nu_2 >0.\label{sc4}
		\end{align}
		Our goal was to establish norm estimate for $q$. We observe   $$q(t,x)=\tilde{q}(t,x)-\left(\partial_tA_0-\sum_{k=1}^{n}\partial_{x_k} A_k\right)+\left(|A_{1,0}|^2-|A_{2,0}|^2\right)(t,x)+\sum\limits_{k=1}^{n}{\left(|A_{2,k}|^2-|A_{1,k}|^2\right)(t,x)}.$$ 
		So we can write 
		\begin{center}
			$\|q\|_{L^\infty{(\R^{n+1})}}\le C\left(\|\tilde{q}\|_{L^\infty{(\R^{n+1})}}+\|\mc{A}\|_{H^{1}{(\R^{n+1})}}+\|\mc{A}\|_{L^\infty{(\R^{n+1})}}\right). \nn\label{sc5}$
		\end{center}
		We combine \eqref{LAST3},\eqref{LAST1} and \eqref{sc4} to obtain
		\[\|q\|_{L^\infty{(\R^{n+1})}}\le C\left(\|\Lambda_1-\Lambda_2\|_{*}^{\alpha_1}+\Big|\log\big|\log|\log\|\Lambda_1-\Lambda_2\|_{*}|\big|\Big|^{-\alpha_2}\right)\m{ for some }\alpha_1,\alpha_2 >0. \nn\] 
		This shows stability of scalar potentials from input-output operator.
	\end{proof}               
	
	\noindent \tb{Acknowledgements:} The author expresses his gratitude to Venkateswaran P. Krishnan for suggesting the problem and many fruitful discussions and comments on this project.

\end{document}